\documentclass[11pt]{article}


\usepackage{amsmath,amssymb,amsfonts,textcomp,amsthm,xifthen,psfrag,graphicx,color,MnSymbol}
\usepackage{subcaption,enumitem}
\usepackage[T1]{fontenc}
\usepackage{lscape}

\usepackage{hyperref}

\date{}



\newtheorem{theorem}{Theorem}
\newtheorem{lemma}[theorem]{Lemma}

\newtheorem{prop}[theorem]{Proposition}

\theoremstyle{definition} 

\def\ignore#1{}

\newcommand{\R}{\mathbb{R}}
\newcommand{\Rt}{\mathbb{R}^3}
\newcommand{\Rtt}{\mathbb{R}^{3\times 3}}

\renewcommand{\SS}{\mathbb{S}}

\newcommand{\cF}{\mathcal{F}}
\newcommand{\cC}{\mathcal{C}}

\newcommand{\cS}{\mathcal{S}}

\newcommand{\wtt}{\wtilde t}

\newcommand{\du}{{\delta\!u}}
\newcommand{\deta}{{\delta\!\eta}}

\newcommand{\dsigma}{{\delta\!\sigma}}
\newcommand{\dtau}{{\delta\!\tau}}

\newcommand{\wtilde}{\widetilde}
\newcommand{\<}{\langle{}}
\renewcommand{\>}{\rangle}

\newcommand{\dual}[2]{\<#1\hspace*{.5mm},#2\>}
\newcommand{\vdual}[2]{(#1\hspace*{.5mm},#2)}

\DeclareMathOperator{\supp}{supp}
\DeclareMathOperator{\diam}{diam}
\DeclareMathOperator{\dist}{dist}

\DeclareMathOperator{\dev}{dev}
\DeclareMathOperator{\tr}{tr}
\DeclareMathOperator{\id}{\mathrm{id}}
\newcommand{\CC}{\mathbb{C}}
\renewcommand{\AA}{\mathbb{A}}

\newcommand{\grad}{\nabla}

\DeclareMathOperator{\curl}{curl}

\newcommand{\strain}[1]{\varepsilon(#1)}

\renewcommand{\div}{\operatorname{div}}

\newcommand{\divref}{\widehat{\operatorname{div}}}

\newcommand{\PiolaK}[1]{\mathcal{PK}_{{#1}}}
\newcommand{\PiolaKc}[1]{\mathcal{PK}^\perp_{{#1}}}
\newcommand{\PiolaCurl}[1]{\mathcal{P}_{#1}}
\newcommand{\Inn}[1]{\mathcal{I}^{nn}_{#1}}
\newcommand{\XX}[1]{\mathbb{X}(#1)}

\newcommand{\sreg}{{s'}}

\newcommand{\mesh}{\mathcal{T}}
\newcommand{\el}{T}
\newcommand{\elref}{{\widehat T}}
\newcommand{\xref}{{\widehat x}}
\newcommand{\yref}{{\widehat y}}
\newcommand{\zref}{{\widehat z}}
\newcommand{\Fref}{{\widehat F}}
\newcommand{\nref}{{\widehat n}}
\newcommand{\vref}{{\widehat v}}
\newcommand{\tauref}{{\widehat\tau}}
\newcommand{\sigmaref}{{\widehat\sigma}}

\newcommand{\trnn}{{\gamma_{1,\mathit{nn}}}}

\newcommand{\G}[1]{{\Gamma_\mathit{#1}}}
\newcommand{\g}[1]{{g_{#1}}}

\textwidth     17cm
\textheight    22cm
\topmargin      -2.5cm
\oddsidemargin 0.0cm


\title{Normal-normal continuous symmetric stress approximation in three-dimensional
       linear elasticity
\thanks{Supported by ANID-Chile through FONDECYT project 1230013}}
\author{
Carsten Carstensen\thanks{
Department of Mathematics, Humboldt--Universit\"at zu Berlin,
Unter den Linden 6, 10099 Berlin, Germany,
email: {\tt cc@math.hu-berlin.de}}
\and
Norbert Heuer\thanks{
Facultad de Matem\'aticas, Pontificia Universidad Cat\'olica de Chile,
Avenida Vicu\~na Mackenna 4860, Santiago, Chile,
email: {\tt nheuer@uc.cl}}}

\begin{document}
\maketitle
\begin{abstract}
We present a conforming setting for a mixed formulation of linear elasticity
with symmetric stress that has normal-normal continuous components across
faces of tetrahedral meshes. We provide a stress element for this formulation
with $30$ degrees of freedom that correspond to standard boundary conditions.
The resulting scheme converges quasi-optimally and is locking free.
Numerical experiments illustrate the performance.

\bigskip
\noindent
{\em AMS Subject Classification}:
65N30, 
74G15, 
74S05  
\end{abstract}

\section{Introduction and model problem}

Adams \& Cockburn \cite{AdamsC_05_MFE} and Arnold \emph{et al.} \cite{ArnoldAW_08_FES}
presented and analyzed, with slightly different degrees of freedom,
the first stable element for conforming stress approximations
in 3D linear elasticity on tetrahedral meshes.
It provides pointwise symmetric piecewise polynomial approximations
of degree at least four with continuous normal components across faces.
With its 162 degrees of freedom (plus displacement degrees) this element is prohibitively
expensive for practical applications. Modifications and generalizations to
arbitrary dimension and higher order have been proposed by Hu \& Zhang, Hu
\cite{HuZ_15_FSM,Hu_15_FEA} and Chen \& Huang \cite{ChenH_22_FEDb}.
The difficulty of constructing conforming elements
with few degrees of freedom has been the reason for alternative developments,
e.g., of discontinuous and non-conforming methods with weak symmetry.
But the quest for cheap stable and conforming stress elements goes on,
in particular on tetrahedra which provide important geometric flexibility.
Hu and Zhang \cite{HuZ_16_FEA} proposed to use low-degree polynomial
tensors enriched with edge and face related bubble polynomials to achieve
stability, in this way reducing the degrees of freedom to $48$.
A variation of this approach and extension to arbitrary dimension is given in \cite{HuangZZZ_24_NLO}.

In this paper we present a pointwise symmetric tetrahedral element of quadratic
polynomials with $30$ degrees of freedom (they can be reduced to $12$ by static condensation)
and provide a variational framework that renders the element conforming.
In geometrical terms our element leads to linear normal-normal components on faces
that are continuous across elements. This approach goes back to the seminal TDNNS
---tangential displacement and normal-normal stress continuous---
method by Pechstein \& Sch\"oberl
\cite{PechsteinS_11_TDN,PechsteinS_12_AMF,PechsteinS_18_ATM} that in turn
transfers the Hellan--Herrmann--Johnson idea from plate bending to linear elasticity,
cf.~\cite{Hellan_67_AEP,Herrmann_67_FEB,Johnson_73_CMF}.
In \cite{CarstensenH_NNC} we presented a continuous and discrete framework
for this setting in the case of plane elasticity with mixed approximation on triangular
meshes. Its advantages are 

\begin{itemize} \setlength\itemsep{0em}
\item a low number of degrees of freedom related to
\item physical degrees of freedom that allow for the
\item treatment of any boundary condition that is physically relevant, and a
\item variational framework that makes the element conforming.
\end{itemize}
Furthermore, it is
\begin{itemize} \setlength\itemsep{0em}
\item provably stable and locking free.
\end{itemize}

This paper solves the critical open case of three-dimensional elasticity
on tetrahedral elements. We stress the fact that all the advantages listed above remain
valid in three dimensions. We expect that the existence of a conforming
framework facilitates, for instance, the analysis of a posteriori error estimation.
Perhaps more important is the fact that, differently from the previously mentioned
$H(\mathrm{div})$-conforming methods, the degrees of freedom of our element avoid
vertex values and edge moments of stresses. Such degrees do not relate to
boundary conditions in engineering applications and are only well posed
when making artificially high regularity assumptions.

In order not to be repetitive, we only briefly recall settings and cite proofs from
\cite{CarstensenH_NNC}, and also refer to \cite{Heuer_GMP} for general results
on a hybrid framework. Instead, we focus on the new ingredients required
for the three-dimensional setting. In particular, we restrict the analysis to
homogeneous Dirichlet boundary conditions.
The incorporation and analysis of general boundary conditions are similar
to the two-dimensional situation treated in \cite{CarstensenH_NNC}.

{\bf Model problem.}
We consider a bounded, polyhedral Lipschitz domain $\Omega\subset\Rt$
with boundary $\Gamma:=\partial\Omega$ decomposed into non-intersecting relatively open pieces
\begin{align*} 
   \Gamma=\mathrm{closure}(\G{hc}\cup\G{sc}\cup\G{ss}\cup\G{sf}).
\end{align*}
Boundary conditions are of hard clamped (``$\mathit{hc}$''), soft clamped (``$\mathit{sc}$''),
simply supported (``$\mathit{ss}$'') and traction (``$\mathit{sf}$'' for stress free) types
on the corresponding boundary pieces. Individual sets are either empty or unions
of connected sets of positive measure, subject to the validity of Korn's inequality.
For the discretization it has to be assumed that the boundary pieces are conforming
with respect to the faces of the tetrahedral elements, that is, interfaces between
different boundary pieces do not cut faces.

The model problem of linear elasticity with general boundary conditions reads
\begin{align*} 
   \sigma=2\mu\strain{u}+\lambda(\div u)\id
   \quad\text{and}\quad -\div\sigma=f\quad\text{in}\ \Omega,
\end{align*}
\begin{equation} \label{BC}
\begin{alignedat}{4}
   u&=\g{D} &&\text{on}\ \G{hc},\qquad
   &&u\cdot n=\g{D,n},\ \pi_t(\sigma n)=\g{N,t}\quad &&\text{on}\ \G{sc},
   \\
   \pi_t u&=\g{D,t},\ n\cdot\sigma n=\g{N,n}\quad &&\text{on}\ \G{ss},
   &&\sigma n=\g{N} &&\text{on}\ \G{sf}.
\end{alignedat}
\end{equation}
Here, $\strain{u}$ and $\id$ are the strain and identity tensors, respectively,
and $\mu,\lambda>0$ are the Lam\'e parameters.
We assume that $\mu$ is fixed but allow for general $\lambda$, corresponding to
an unrestricted Poisson ratio $\nu\in (0,1/2)$.
For short, we write $\sigma=\CC\strain{u}$ with elasticity tensor $\CC$, and
let $\AA:=\CC^{-1}$ be the compliance tensor. Furthermore,
$n$ is the exterior unit normal vector on $\Gamma$, and
$\pi_t u:=(u-(u\cdot n)n)|_\Gamma$ is the tangential trace on $\Gamma$ of vector fields $u$.
Boundary data have to be consistent with the existence of a function
$u\in H^1(\Omega;\Rt)$ and tensor $\sigma\in H(\div,\Omega;\R^{3\times 3})$
that satisfy \eqref{BC} as appropriate traces.
Note that $f,\g{D},\g{D,t},\g{N},\g{N,t}$ are vector-valued and
$\g{D,n},\g{N,n}$ are scalar functions.

As mentioned before, our forthcoming variational formulation and mixed finite element
discretization are compatible with the general boundary conditions \eqref{BC}.
Only for ease of presentation will we restrict ourselves to homogeneous Dirichlet
conditions, thus consider
\begin{align} \label{model}
   \sigma=\CC\strain{u},\quad
   -\div\sigma=f\quad\text{in}\ \Omega,\qquad
   u|_\Gamma=0,
\end{align}
and refer to \cite{CarstensenH_NNC} for details concerning general boundary conditions.

{\bf Overview.} The remainder is structured as follows. In Section~\ref{sec_trace}
we introduce our notation for spaces and norms. At the center is the definition
of skeleton trace spaces of tangential and normal displacements,
their characterization (Propositions~\ref{prop_trace_norm},~\ref{prop_trace_tang})
and density in the space of displacement traces (Proposition~\ref{prop_cont}).
The proof of the density result is based on a 3D version of
Tartar's result that says that the set of $C^\infty$-functions with compact support
that vanish in a neighborhood of a point $x\in\Omega\subset\R^2$ are dense in $H^1_0(\Omega)$.
This is Proposition~\ref{prop_Tartar}. Having these results at hand,
we follow the canonical procedure from \cite{CarstensenH_NNC} to define
the space $H_{nn}(\div,\mesh;\SS)$ of symmetric piecewise $H(\mathrm{div})$-tensors
with normal-normal continuous traces on polyhedral meshes and a related displacement
trace operator.
Proposition~\ref{prop_conf} summarizes corresponding conformity and norm relations.
We note that not all the results on the trace spaces are required to prove
Proposition~\ref{prop_conf}.
We present them for their general relevance and possible future reference.
Section~\ref{sec_mixed} presents the mixed variational formulation of the model
problem that defines the stress as an element of $H_{nn}(\div,\mesh;\SS)$.
Theorem~\ref{thm_VF} establishes its Poisson-ratio robust well-posedness.
Section~\ref{sec_fem} is devoted to the discrete setting and analysis.
In \S\ref{sec_el} we introduce the stress element and disclose its degrees of freedom.
The corresponding conforming piecewise polynomial approximation space
$P^2_{nn}(\mesh)$ is the subject of \S\ref{sec_approx}, with approximation
properties shown by Proposition~\ref{prop_Inn}.
The resulting mixed finite element scheme is presented in \S\ref{sec_fem_conv},
and shown to be locking free with expected convergence orders by
Theorem~\ref{thm_Cea}. In Section~\ref{sec_num} we present a numerical example
that illustrates the locking-free convergence of our method.

Throughout, $a\lesssim b$ stands for $a\le Cb$ with a generic constant $C>0$
that is independent of involved functions and mesh $\mesh$.
Relation $a\gtrsim b$ means that $b\lesssim a$.

\section{Analytical framework} \label{sec_trace}

In this section we introduce and analyze spaces and trace operators that provide the basis
of our mixed formulation. The next subsection presents some notation
and collects results for trace spaces of vector functions.
In \S\ref{sec_trace_nn} we define the stress space of pointwise symmetric tensors
with continuous normal-normal traces and establish duality relations with
a trace space.

\subsection{Notation, spaces and trace operators}

For sub-domains and surfaces $\omega\subset\overline{\Omega}$ we use standard Lebesgue
and Sobolev spaces $L_q(\omega;U)$ ($q>1$) and $H^s(\omega;U)$ ($s>0$) of
$U$-valued functions on $\omega$ with $U\in\{\R,\Rt,\SS\}$.
Here, $\SS\subset\Rtt$ denotes the space of symmetric constant tensors.
The generic $L_2(\omega;U)$-inner product and norm are $\vdual{\cdot}{\cdot}_\omega$
and $\|\cdot\|_\omega=\|\cdot\|_{0,\omega}$, respectively.
For integer $s$, $\|\cdot\|_{s,\omega}$ denotes the standard norm in $H^s(\omega;U)$,
except for $s=1$ and $U=\Rt$ in which case
$\|\cdot\|_{1,\omega}:=\Bigl(\|\cdot\|_\omega^2+\|\strain{\cdot}\|_\omega^2\Bigr)^{1/2}$
with symmetric gradient $\strain{\cdot}:=(\grad(\cdot)+\grad(\cdot)^\top)/2$.
For non-integer $s>0$, $\|\cdot\|_{\omega,s}$ and $|\cdot|_{\omega,s}$ denote the
Sobolev-Slobodeckij norms and seminorms, cf.~\cite{Adams}.
Given $s,r\ge 0$ and $\omega\subset\Omega$, we need the space
\[
   H^{s,r}(\div,\omega;\SS):=\{\tau\in H^s(\omega;\SS);\; \div\tau\in H^r(\omega;\Rt\}
\]
with row-wise application $\div\tau$ of the divergence operator, and denote
$H(\div,\omega;\SS) :=$\linebreak $H^{0,0}(\div,\omega;\SS)$
(identifying $H^0(\Omega;U)$ with $L_2(\Omega;U)$).
The latter space is provided with the graph norm
$\|\cdot\|_{\div,\omega}:=\Bigl(\|\cdot\|_\omega^2+\|\div\cdot\|_\omega^2\Bigr)^{1/2}$.
We also need the topological dual $H^{-1}(\omega)$ of $H^1_0(\omega)$ with norm
$\|\cdot\|_{-1,\omega}$.
We generally drop index $\omega$ in the notation of norms and inner products when $\omega=\Omega$.
The trace of a tensor $\tau$ is $\tr(\tau):=\tau:\id$ with identity tensor $\id$
and Frobenius product ``$:$''; the deviatoric part of $\tau$ reads
$\dev(\tau):=\tau-\tr(\tau)\id/3$.

We consider a regular decomposition $\mesh$ of $\Omega$ into shape-regular tetrahedra
and formally denote by $\cS:=\{\partial\el;\;\el\in\mesh\}$ its skeleton.
We need the piecewise constant function $h_\mesh$ with $h_\mesh|_\el:=\diam(\el)$
for $\el\in\mesh$.
The set of faces of $\el\in\mesh$ is $\cF(\el)$, and the sets of all (resp. interior) faces is
$\cF:=\cup_{\el\in\mesh}\cF(\el)$ (resp. $\cF(\Omega)$). In the following,
$\dual{\cdot}{\cdot}_K$ denotes the generic $L_2$-duality on $K\in\cF\cup\{\Gamma\}$.
Decomposition $\mesh$ gives rise to product spaces, with corresponding notation
where $\Omega$ is replaced with $\mesh$, e.g., $H^s(\mesh;U):=\Pi_{\el\in\mesh} H^s(\el;U)$.
Throughout, we identify functions of product spaces with piecewise defined functions.
Mesh $\mesh$ induces the canonical trace operator
\begin{align*}
   \gamma:\;&\left\{\begin{array}{cll}
               H^1(\Omega;\Rt) &\rightarrow& H(\div,\mesh;\SS)^*,\\
               v &\mapsto& \dual{\gamma(v)}{\tau}_\cS
               := \vdual{\strain{v}}{\tau} + \vdual{v}{\div\tau}_\mesh
            \end{array}\right.
\end{align*}
with support on $\cS$ and trace space
\[
   H^{1/2}(\cS;\Rt) := \gamma(H^1(\Omega;\Rt)).
\]
Here, $\vdual{\cdot}{\cdot}_\mesh$ is the generic notation for
$L_2(\mesh;U)$ dualities, $U\in\{\R,\Rt,\SS\}$.
There is an inherent duality between $H^{1/2}(\cS;\Rt)$ and $H(\div,\mesh;\SS)$,
\begin{align} \label{duality}
   \dual{\varphi}{\tau}_\cS := \dual{\gamma(v)}{\tau}_\cS
   \quad (\varphi\in H^{1/2}(\cS;\Rt), \tau\in H(\div,\mesh;\SS)),
\end{align}
where $v$ is any element of the pre-image $\gamma^{-1}(\varphi)\subset H^1(\Omega;\Rt)$,
$\varphi=\gamma(v)$.

Given the tangential and normal trace operators
\begin{align*}
   \gamma_t:\;&\left\{\begin{array}{cll}
               H^1(\Omega;\Rt) &\rightarrow& H^1(\mesh;\Rt)^*,\\
               v &\mapsto& \dual{\gamma_t(v)}{z}_\cS
               := \vdual{\curl v}{z} - \vdual{v}{\curl z}_\mesh,
            \end{array}\right.\\
   \gamma_n:\;&\left\{\begin{array}{cll}
               H^1(\Omega;\Rt) &\rightarrow& H^1(\mesh;\Rt)^*,\\
               v &\mapsto& \dual{\gamma_n(v)}{z}_\cS
               := \vdual{\div v}{z} - \vdual{v}{\grad z}_\mesh,
            \end{array}\right.
\end{align*}
we introduce their kernels
\(
   H^1_n(\Omega,\cS;\Rt):=\mathrm{ker}(\gamma_t)
\)
and
\(
   H^1_t(\Omega,\cS;\Rt):=\mathrm{ker}(\gamma_n).
\)
These are closed subspaces of $H^1(\Omega;\Rt)$ and give rise to the trace spaces
\begin{align}
   \wtilde H^{1/2}_{0,n}(\cS;\Rt) &:= \gamma(H^1_n(\Omega,\cS;\Rt)\cap H^1_0(\Omega;\Rt)),
   \label{H_trace_nor} \\ \nonumber
   \wtilde H^{1/2}_{0,t}(\cS;\Rt) &:= \gamma(H^1_t(\Omega,\cS;\Rt)\cap H^1_0(\Omega;\Rt))
\end{align}
%
which consist of normal and tangential vector functions on $\cS$.
To be more specific, let us introduce some further notation.
For a function $v\in H^1(\el;U)$ ($\el\in\mesh$, $U\in\{\R,\Rt\}$)
its traces on the boundary $\partial\el$ and faces $F\in\cF(\el)$
are denoted by $v|_{\partial\el}\in H^{1/2}(\partial\el;U)$
and $v|_F\in H^{1/2}(F;U)$, respectively.
Restrictions of functions $\varphi\in H^{1/2}(\cS;\Rt)$ to subsets of $\cS$
are defined through their extensions, e.g., $\varphi|_\Gamma:=v|_\Gamma$
for $v\in H^1(\Omega;\Rt)$ with $\gamma(v)=\varphi$.
We denote by $\wtilde H^{1/2}(F)\subset H^{1/2}(F)$ the restriction onto $F$
of the subspace of functions $v\in H^{1/2}(\partial T)$ that vanish on
$\partial\el\setminus\overline{F}$, $F\in\cF(\el)$, $\el\in\mesh$.

\begin{prop}[$\wtilde H^{1/2}_{0,n}(\cS;\Rt)$] \label{prop_trace_norm}
Any function $\varphi\in H^{1/2}(\cS;\Rt)$ satisfies
$\varphi\in \wtilde H^{1/2}_{0,n}(\cS;\Rt)$ if and only if
$\pi_t\varphi|_F=0$, $\varphi\cdot n|_F\in \wtilde H^{1/2}(F)$ for all $F\in\cF(\Omega)$,
and $\varphi|_\Gamma=0$.
\end{prop}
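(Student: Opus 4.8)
The plan is to prove the two implications separately, using throughout that element-wise integration by parts in the definition of $\gamma$ gives $\dual{\gamma(v)}{\tau}_\cS=\sum_{\el\in\mesh}\dual{v}{\tau n}_{\partial\el}$, so that $\varphi|_F=v|_F\in H^{1/2}(F;\Rt)$ on every face $F\in\cF$ whenever $\varphi=\gamma(v)$; and recalling from the definition of $\gamma_t$ (which element-wise integration by parts turns into a sum of tangential-trace pairings over the $\partial\el$) together with density of $\wtilde H^{1/2}(F;\Rt)$ in $L_2(F;\Rt)$ that $H^1_n(\Omega,\cS;\Rt)=\mathrm{ker}(\gamma_t)$ is exactly the set of $v\in H^1(\Omega;\Rt)$ with $\pi_t(v|_F)=0$ for every $F\in\cF$. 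For the necessity part, let $\varphi=\gamma(v)$ with $v\in H^1_n(\Omega,\cS;\Rt)\cap H^1_0(\Omega;\Rt)$. Then $\varphi|_\Gamma=v|_\Gamma=0$ since $v\in H^1_0(\Omega;\Rt)$, and $\pi_t\varphi|_F=\pi_t(v|_F)=0$ for $F\in\cF(\Omega)$ since $v\in\mathrm{ker}(\gamma_t)$. It remains to show $\varphi\cdot n|_F=v\cdot n|_F\in\wtilde H^{1/2}(F)$ for interior $F$.

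Fix $F\in\cF(\Omega)$, an element $\el$ with $F\in\cF(\el)$, and the unit normal $n_F$ of $F$. Since $\pi_t v$ vanishes on every face of $\el$ (on interior faces because $v\in H^1_n(\Omega,\cS;\Rt)$, on boundary faces because $v|_\Gamma=0$), the trace $v|_{\partial\el}$ is face-wise normal: $v|_{F'}=a_{F'}\,n_{F'}$ with $a_{F'}:=v\cdot n_{F'}|_{F'}\in H^{1/2}(F')$ for each $F'\in\cF(\el)$. By the trace theorem $v|_{\partial\el}\in H^{1/2}(\partial\el;\Rt)$, and on the polyhedral surface $\partial\el$ this membership is equivalent to $v|_{F'}\in H^{1/2}(F';\Rt)$ for all $F'$ together with finiteness of the edge-compatibility integrals
\[
   \int_{F'}\!\int_{F''}\frac{|v(x)-v(y)|^2}{|x-y|^{3}}\,dx\,dy
\]
over pairs $F',F''\in\cF(\el)$ sharing an edge. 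For a pair $(F,F')$ sharing an edge $e$ of $F$ this says $\int_F\!\int_{F'}|a_F(x)n_F-a_{F'}(y)n_{F'}|^2|x-y|^{-3}\,dx\,dy<\infty$; as $n_F$ and $n_{F'}$ are linearly independent (distinct faces of a tetrahedron), the form $(\alpha,\beta)\mapsto|\alpha n_F-\beta n_{F'}|^2$ is positive definite, whence $|a_F(x)n_F-a_{F'}(y)n_{F'}|^2\gtrsim a_F(x)^2+a_{F'}(y)^2$ with a constant depending only on the dihedral angle of $\el$ at $e$. Dropping the $a_{F'}(y)^2$ term and using the elementary bound $\int_{F'}|x-y|^{-3}\,dy\gtrsim\dist(x,e)^{-1}$ for $x\in F$, we get $\int_F a_F(x)^2\dist(x,e)^{-1}\,dx<\infty$; summing over the three edges of $F$ and using $\dist(\cdot,\partial F)=\min_e\dist(\cdot,e)$ yields the weighted bound $\int_F a_F(x)^2\dist(x,\partial F)^{-1}\,dx<\infty$, which together with $a_F\in H^{1/2}(F)$ is the standard characterization of $\wtilde H^{1/2}(F)$. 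Hence $\varphi\cdot n|_F=a_F\in\wtilde H^{1/2}(F)$. This disentangling of the edge-compatibility integrals via linear independence of the face normals, followed by its conversion into the weighted characterization of $\wtilde H^{1/2}(F)$, is the only non-routine step; the rest is bookkeeping with the definitions.

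For sufficiency, let $\varphi\in H^{1/2}(\cS;\Rt)$ satisfy the three conditions and pick $v\in H^1(\Omega;\Rt)$ with $\gamma(v)=\varphi$, so that $v|_F=\varphi|_F$ on every $F\in\cF$. Then $v|_\Gamma=\varphi|_\Gamma=0$ gives $v\in H^1_0(\Omega;\Rt)$; and $\pi_t(v|_F)=\pi_t\varphi|_F=0$ on interior faces while $\pi_t(v|_F)=0$ on boundary faces (where $v$ vanishes), so $v\in\mathrm{ker}(\gamma_t)=H^1_n(\Omega,\cS;\Rt)$. Therefore $\varphi=\gamma(v)\in\gamma\bigl(H^1_n(\Omega,\cS;\Rt)\cap H^1_0(\Omega;\Rt)\bigr)=\wtilde H^{1/2}_{0,n}(\cS;\Rt)$. (The condition $\varphi\cdot n|_F\in\wtilde H^{1/2}(F)$ plays no role in this direction; by the necessity part it is in fact a consequence of $\pi_t\varphi|_F=0$ and $\varphi|_\Gamma=0$ together with $\varphi\in H^{1/2}(\cS;\Rt)$.)
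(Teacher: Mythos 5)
Your proof is correct, and for the sufficiency direction it coincides with the paper's short argument (identify $\mathrm{ker}(\gamma_t)$ with face-wise vanishing tangential traces, use $\varphi|_\Gamma=0$ on boundary faces); but for the key implication---that $\varphi\cdot n|_F$ belongs to $\wtilde H^{1/2}(F)$---you take a genuinely different route. The paper argues constructively: it chooses constants with $n_1+c_jt_{1,1}+d_jt_{1,2}=\alpha_j\,n_{j'}\times n_{j''}$, sets $g=\sum_{j=2}^4(n_1+c_jt_{1,1}+d_jt_{1,2})\lambda_j$, and exhibits the scalar function $w=g\cdot v|_\el\in H^1(\el)$ whose trace on $\partial\el$ is exactly the zero extension of $v\cdot n_1|_{F_1}$; membership in $\wtilde H^{1/2}(F_1)$ then holds by the paper's very definition of that space, with no fractional-norm machinery. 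You instead work intrinsically with the Gagliardo seminorm of $v|_{\partial\el}$ on the polyhedral surface, use the linear independence of the normals of adjacent faces to decouple the edge-compatibility integrals, and arrive at the weighted criterion $\int_F|\varphi\cdot n|^2\dist(\cdot,\partial F)^{-1}\,dx<\infty$, which you convert back through the $H^{1/2}_{00}$-type characterization of $\wtilde H^{1/2}(F)$. The individual steps are fine (the positive-definiteness of $(\alpha,\beta)\mapsto|\alpha n_F-\beta n_{F'}|^2$ and the lower bound $\int_{F'}|x-y|^{-3}\,dy\gtrsim\dist(x,e)^{-1}$ both check out, and every edge of a face of a tetrahedron is indeed shared with another face), but your argument leans on two standard-yet-nontrivial facts that you only quote and would need to reference: the equivalence of $H^{1/2}(\partial\el)$ with the face-wise Slobodeckij norms plus edge-coupling integrals on the polyhedral boundary, and the weighted characterization of the zero-extension space (Lions--Magenes/Grisvard). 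What each approach buys: the paper's construction is shorter, self-contained, and never leaves its definition of $\wtilde H^{1/2}(F)$ as restrictions of functions on $\partial\el$ vanishing on the complement; yours is more quantitative and less dependent on the clever algebraic choice of $g$, yielding the explicit weighted bound as a by-product, at the price of importing the fractional-space characterizations. Your closing remark that the $\wtilde H^{1/2}(F)$ condition is redundant in one direction is consistent with the paper, whose easy implication likewise uses only $\pi_t\varphi|_F=0$ and $\varphi|_\Gamma=0$.
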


\begin{proof}
Given $\varphi\in H^{1/2}(\cS;\Rt)$, $\pi_t\varphi|_F=0$ for all $F\in\cF(\Omega)$
and $\varphi|_\Gamma=0$ imply $\varphi\in \wtilde H^{1/2}_{0,n}(\cS;\Rt)$.
To see the other direction consider $v\in H^1(\Omega;\Rt)$ with $\varphi=\gamma(v)$.
For a tetrahedron $\el\in\mesh$ let $x_j$ denote its vertices with opposite faces
$F_j$ ($j=1,\ldots,4$).
The corresponding barycentric coordinates and normal vectors are
$\lambda_j$ and $n_j$. Assuming that the tangential traces of $v$ on faces $F_j$ vanish
we have to show that $v\cdot n_j|_{F_j}\in\wtilde H^{1/2}(F_j)$ ($j=1,\ldots,4$).

It is enough to show that $v\cdot n_1|_{F_1}\in\wtilde H^{1/2}(F_1)$.
We complement vector $n_1$ with two linearly independent
vectors $t_{1,1},t_{1,2}$ that are orthogonal to $n_1$.
There are constants $c_j,d_j,\alpha_j\in \R$, $\alpha_j\not=0$ ($j=2,3,4$) such that
\[
   n_1+c_j t_{1,1}+d_j t_{1,2}=\alpha_j n_{j'}\times n_{j''},
   \quad j=2,3,4,
\]
with $j',j''\in\{2,3,4\}\setminus\{j\}$, $j'<j''$, that is,
$(j,j',j'')$ is a permutation of $(2,3,4)$.
In fact, since $\{n_1,t_{1,2},t_{1,2}\}$ is a basis of $\Rt$,
there are numbers $\xi_j,\eta_j,\rho_j\in\R$ such that
$\xi_j n_1+\eta_j t_{1,1}+\rho_j t_{1,2}=n_{j'}\times n_{j''}$ ($j=2,3,4$).
Note that $\xi_j\not=0$ because $n_1\cdot(n_{j'}\times n_{j''})\not=0$
by the linear independence of $n_1,n_{j'},n_{j''}$.
It follows that $c_j=\eta_j/\xi_j$, $d_j=\rho_j/\xi_j$ and $\alpha_j=1/\xi_j$ ($j=2,3,4$).
We continue to select
\[
   g:= \sum_{j=2}^4 (n_1+c_j t_{1,1}+d_j t_{1,2})\lambda_j
\]
and define
$w:=g\cdot v|_\el\in H^1(\el)$. Since by assumption the tangential traces
of $v$ on $\partial\el$ vanish, $w$ satisfies
\begin{align*}
   w|_{F_1}= \sum_{j=2}^4 \lambda_j(n_1+c_j t_{1,1}+d_j t_{1,2})\cdot v|_{F_1}
           = \sum_{j=2}^4 \lambda_j n_1\cdot v|_{F_1} = n_1\cdot v|_{F_1}
\end{align*}
and, because $\lambda_j|_{F_j}=0$ and $v|_{F_j}=(v\cdot n_j) n_j|_{F_j}$,
\begin{align*}
   w|_{F_m}= \sum_{j=2}^4 \lambda_j(n_1+c_j t_{1,1}+d_j t_{1,2})\cdot v|_{F_m}
           = \sum_{j\in\{2,3,4\}\setminus\{m\}}
             \lambda_j\alpha_j(v\cdot n_m) (n_{j'}\times n_{j''})\cdot n_m|_{F_m}
           = 0
\end{align*}
for $m=2,3,4$ by the orthogonality of $n_m$ and $n_{j'}\times n_{j''}$
($m=j'$ or $m=j''$ in the sum above).
We conclude that $w|_{\partial\el}\in H^{1/2}(\partial\el)$
is a zero extension of $v\cdot n_1|_{F_1}$ to the other faces of $\el$,
that is, $\varphi\cdot n_1|_{F_1}=v\cdot n_1|_{F_1}\in\wtilde H^{1/2}(F_1)$.
\end{proof}

There is no corresponding characterization of $\wtilde H^{1/2}_{0,t}(\cS;\Rt)$
as face-wise ``$\wtilde H^{1/2}$-space''. To see this, consider a tetrahedron $T$ with vertices
$x_i$, faces $F_i$ opposite to $x_i$, normal vectors $n_i$ on $F_i$,
and barycentric coordinates $\lambda_i$, $i=1,\ldots,4$.
Let $e_{1,2}\in\Rt\setminus\{0\}$ be a vector that is generated by the edge shared by
$F_1$ and $F_2$.
Function $v:=\lambda_3 \lambda_4 e_{1,2}$ satisfies $v\in H^1(T;\Rt)$,
$v\cdot n_i|_{F_i}=0$ ($i=1,2$) and $v|_{F_i}=0$ ($i=3,4$), but its tangential component
$\pi_t v|_{F_1}=v|_{F_1}$ does not vanish on the shared edge.
Therefore, $\pi_t v|_{F_1}$ cannot be continuously extended by zero onto $F_2$
in the trace space on $\partial T$,
which would be an appropriate characterization of the ``$\wtilde H^{1/2}(F_1;\Rt)$-property''
that corresponds to the scalar case of normal components in $\wtilde H^{1/2}(F)$ just considered.
However, the edge-normal components of tangential face-traces of
functions from $\wtilde H^{1/2}_{0,t}(\cS;\Rt)$ do have such a property.
A proper definition requires more notation.

For a simplex $\el\in\mesh$ with face $F\in\cF(\el)$ let $n_{\partial F}$ denote the exterior
unit normal vector along $\partial F$ that is normal to $n$ (a vector normal to $F$).
For an edge $e$ of $F$, $n_{F,e}$ is the restriction of $n_{\partial F}$ to $e$.
We define $\wtilde H^{1/2}_{tn}(F;\Rt)$ as the subspace of
functions $\varphi\in H^{1/2}(F;\Rt)$ such that,
for every edge $e$ of $F$ shared with a different face $F'\in\cF(\el)$,
there exists a function $v\in H^{1/2}(\partial\el)$ with
$v|_F=\varphi|_F\cdot n_{F,e}$ and $v|_{F'}=0$. In other words,
face-wise \underline{t}angential traces have edge-\underline{n}ormal components that,
for every edge, can be continuously extended in $H^{1/2}(\partial\el;\Rt)$
by zero across the edge to the neighboring face
(suggesting the ``$tn$'' index notation).

\begin{prop}[$\wtilde H^{1/2}_{0,t}(\cS;\Rt)$] \label{prop_trace_tang}
Any function $\varphi\in H^{1/2}(\cS;\Rt)$ satisfies
$\varphi\in \wtilde H^{1/2}_{0,t}(\cS;\Rt)$ if and only if
$\varphi\cdot n|_F=0$, $\varphi|_F\in \wtilde H^{1/2}_{tn}(F;\Rt)$ for all $F\in\cF(\Omega)$,
and $\varphi|_\Gamma=0$.
\end{prop}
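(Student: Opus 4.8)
The plan is to prove both implications, in parallel with the proof of Proposition~\ref{prop_trace_norm}; here the condition $\varphi|_F\in\wtilde H^{1/2}_{tn}(F;\Rt)$ takes over the role that $\varphi\cdot n|_F\in\wtilde H^{1/2}(F)$ plays there.

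For the implication ``$\Leftarrow$'' I would start from a representative $v\in H^1(\Omega;\Rt)$ with $\gamma(v)=\varphi$. Since $\varphi|_\Gamma$ does not depend on the chosen representative, $\varphi|_\Gamma=0$ allows us to take $v\in H^1_0(\Omega;\Rt)$, and since $v$ is globally $H^1$ the face restriction $\varphi|_F=v|_F$ is single-valued for $F\in\cF(\Omega)$. Hence $\varphi\cdot n|_F=0$ gives $v\cdot n|_F=0$ on every interior face, which together with $v|_\Gamma=0$ amounts to $v\cdot n=0$ on the whole skeleton, i.e.\ $v\in\mathrm{ker}(\gamma_n)=H^1_t(\Omega,\cS;\Rt)$. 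Thus $v\in H^1_t(\Omega,\cS;\Rt)\cap H^1_0(\Omega;\Rt)$ and $\varphi=\gamma(v)\in\wtilde H^{1/2}_{0,t}(\cS;\Rt)$; note that the $\wtilde H^{1/2}_{tn}$-property is not used for this direction.

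For the implication ``$\Rightarrow$'' I would fix $v\in H^1_t(\Omega,\cS;\Rt)\cap H^1_0(\Omega;\Rt)$ with $\gamma(v)=\varphi$, so $v|_\Gamma=0$ and $v\cdot n=0$ on all faces. Then $\varphi|_\Gamma=v|_\Gamma=0$ and $\varphi\cdot n|_F=v\cdot n|_F=0$ for $F\in\cF(\Omega)$ are immediate, and it only remains to show $\varphi|_F\in\wtilde H^{1/2}_{tn}(F;\Rt)$. To that end I would fix $T\in\mesh$ with a face $F$ and an edge $e$ of $F$ shared with another face $F'$ of $T$, and relabel the vertices $x_1,\dots,x_4$ with opposite faces $F_i$, normals $n_i$ and barycentric coordinates $\lambda_i$ so that $F=F_1$, $F'=F_2$, and $e$ is the edge joining $x_3$ and $x_4$. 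Since $n_1$ and the in-face normal $n_{F_1,e}$ span the plane orthogonal to $x_4-x_3$, which also contains $n_2$, one has, after rescaling $n_2$, $n_2=\alpha n_1+n_{F_1,e}$ with some $\alpha\in\R$ (the $n_{F_1,e}$-coefficient is nonzero because $n_1,n_2$ are linearly independent). I would then introduce the affine $\Rt$-valued field
\[
   \psi:=\lambda_2\,n_{F_1,e}+(\lambda_3+\lambda_4)\,n_2
\]
on $T$, for which $\psi|_{F_1}=n_{F_1,e}+(1-\lambda_2)\,\alpha\,n_1$ and $\psi|_{F_2}=(1-\lambda_1)\,n_2$, and set $w:=(\psi\cdot v)|_T\in H^1(T)$. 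Using $v\cdot n_1|_{F_1}=0$ and $v\cdot n_2|_{F_2}=0$ one obtains $w|_{F_1}=n_{F_1,e}\cdot v|_{F_1}=\varphi|_{F_1}\cdot n_{F_1,e}$ and $w|_{F_2}=0$, so $w|_{\partial T}\in H^{1/2}(\partial T)$ is exactly the extension required in the definition of $\wtilde H^{1/2}_{tn}(F_1;\Rt)$; since $T$, $F$ and $e$ were arbitrary, this holds for all $F\in\cF(\Omega)$.

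The main obstacle is the choice of the cutoff field $\psi$ in the ``$\Rightarrow$'' direction: its trace on $F_1$ must reduce to the constant edge-normal $n_{F_1,e}$ up to a multiple of $n_1$ (which is annihilated by $v\cdot n_1|_{F_1}=0$), while its trace on $F_2$ must stay parallel to $n_2$ (annihilated by $v\cdot n_2|_{F_2}=0$); the identity $n_2=\alpha n_1+n_{F_1,e}$, which expresses that $n_1$, $n_{F_1,e}$ and $n_2$ all lie in the plane $\perp(x_4-x_3)$, is precisely what lets a single affine field do both jobs at once. The tetrahedron example preceding the proposition shows that no analogous zero-extension exists for the full tangential trace, which is why only edge-normal components enter the statement.
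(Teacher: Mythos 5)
Your proof is correct and takes essentially the same route as the paper: the non-trivial direction rests on the observation that $n$, $n'$ and $n_{F,e}$ are coplanar (all orthogonal to the shared edge), so that dotting $v|_\el$ with a suitable vector field produces a scalar $w\in H^1(\el)$ with $w|_F=\varphi|_F\cdot n_{F,e}$ and $w|_{F'}=0$, which is exactly the required extension. The only difference is cosmetic: the paper simply uses the constant vector $n_{F,e}+cn$ (with $n_{F,e}+cn=\alpha n'$) instead of your barycentrically weighted affine field, the weights being unnecessary here because the extension only has to vanish on the single neighboring face $F'$.
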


\begin{proof}
The proof is similar to the proof of Proposition~\ref{prop_trace_norm} and
we merely discuss the non-trivial implication.
Given $\varphi\in \wtilde H^{1/2}_{0,t}(\cS;\Rt)$, $v\in H^1(\Omega;\Rt)$ with $\varphi=\gamma(v)$,
and $\el\in\mesh$, it is immediate that $\varphi\cdot n|_{\partial\el}=0$,
and we have to show that $\varphi|_F\in \wtilde H^{1/2}_{tn}(F;\Rt)$ for $F\in\cF(\el)$.
Let $F,F'\in\cF(\el)$ be given with $F\not= F'$ and common edge $e$. For the corresponding
normal vectors $n,n'$, there are $c,\alpha\in\R$ such that $n_{F,e}+c n=\alpha n'$.
In fact, $n,n',n_{F,e}$ are linearly dependent because
they are orthogonal to edge $e$ and $n_{F,e}\cdot n'\not=0$.
It follows that $w:=(n_{F,e}+cn)\cdot v|_\el\in H^1(\el)$.
Furthermore, $w|_F=\varphi\cdot n_{F,e}|_F$ and $w|_{F'}=\alpha n'\cdot\varphi|_{F'}=0$,
that is, $\varphi|_F\in \wtilde H^{1/2}_{tn}(F,\Rt)$ as wanted.
\end{proof}

We generalize the concept of ``a point is too small'' in two dimensions from
Tartar \cite[\S 17]{Tartar_07_ISS} to three dimensions. In two dimensions,
it says that $C_0^\infty(\Omega\setminus\{x\})$ for $x\in\Omega\subset\R^2$
is still dense in $H^1_0(\Omega)$.

Consider a finite set of pairwise distinct, straight lines $L_1,\ldots,L_J$ in $\Rt$,
$L_j=\{p_j+s q_j;\;s\in\R\}$ with $p_j,q_j\in\R^3$, $|q_j|=1$,
and $L_j\cap\Omega\not=\emptyset$, $j=1,\ldots,J$.
(Here, $\Omega\subset\R^3$ can be any bounded open set.)
The set of lines
\(
   L:=L_1\cup\ldots \cup L_J
\)
is ``too small'', in the following sense.

\begin{prop}[density] \label{prop_Tartar}
The set $C_0^\infty(\Omega\setminus L)$ is dense in $H^1_0(\Omega)$.
\end{prop}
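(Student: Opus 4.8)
The plan is to prove that $C_0^\infty(\Omega\setminus L)$ is dense in $H^1_0(\Omega)$ by showing that the only element of $H^{-1}(\Omega)$ that annihilates $C_0^\infty(\Omega\setminus L)$ is zero, or equivalently, by directly constructing cut-off approximations. I will follow Tartar's two-dimensional argument but account for the fact that in $\R^3$ a point has ``capacity zero'' with respect to $H^1$ only in dimension two; a \emph{line}, however, is a one-codimensional-minus-one object in $\R^3$, i.e.\ it is the analogue in codimension two of a point in $\R^2$, and codimension-two sets have zero $H^1$-capacity in any dimension. So the right statement is that a line (more precisely a locally finite union of lines) is removable.

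First I would reduce to a single line $L_j$ and, by a partition of unity subordinate to a covering of $\Omega$ by balls each meeting at most one $L_j$ away from the finitely many pairwise intersection points (which are isolated and hence removable as single points fails only in dimension $2$; but a single point in $\R^3$ is also removable since codimension $3\ge 2$), to the model situation $L=\{(0,0,t):t\in\R\}$, the $z$-axis. Next, given $u\in H^1_0(\Omega)$ and $\varepsilon>0$, I would first approximate $u$ by a function $\tilde u\in C_0^\infty(\Omega)$ in the $H^1$-norm, and then cut out a tubular neighborhood of the $z$-axis. The standard device is the logarithmic cut-off: for $\delta\in(0,1)$ set $\zeta_\delta(x) := \eta\bigl(\tfrac{\log(r(x)/\delta^2)}{\log(1/\delta)}\bigr)$ where $r(x)=\sqrt{x_1^2+x_2^2}$ is the distance to the axis and $\eta:\R\to[0,1]$ is smooth with $\eta\equiv 0$ on $(-\infty,0]$ and $\eta\equiv 1$ on $[1,\infty)$, so that $\zeta_\delta$ vanishes for $r\le\delta^2$ and equals $1$ for $r\ge\delta$. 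Then $\zeta_\delta\tilde u\in C_0^\infty(\Omega\setminus L)$ and I must show $\zeta_\delta\tilde u\to\tilde u$ in $H^1$ as $\delta\to 0$.

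The core estimate is that $\|\grad\zeta_\delta\|_{L^\infty}\cdot(\text{measure of the shell }\delta^2\le r\le\delta)^{1/2}\to 0$: one computes $|\grad\zeta_\delta|\lesssim \tfrac{1}{r\log(1/\delta)}$, so using cylindrical coordinates and the fact that the shell $\{\delta^2\le r\le\delta\}\cap\mathrm{supp}\,\tilde u$ has bounded extent in $z$,
\begin{align*}
   \int_{\delta^2\le r\le\delta} |\grad\zeta_\delta|^2\,dx
   \;\lesssim\; \frac{1}{(\log(1/\delta))^2}\int_{\delta^2}^{\delta}\frac{1}{r^2}\,r\,dr
   \;=\; \frac{1}{(\log(1/\delta))^2}\,\log\frac{1}{\delta}
   \;=\; \frac{1}{\log(1/\delta)}\;\xrightarrow[\delta\to 0]{}\;0.
\end{align*}
Combined with $\|\tilde u\|_{L^\infty}<\infty$, this gives $\|\tilde u\grad\zeta_\delta\|\to 0$; and $\|(1-\zeta_\delta)\grad\tilde u\|\to 0$ by dominated convergence since $1-\zeta_\delta$ is supported in the shrinking shell. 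Hence $\zeta_\delta\tilde u\to\tilde u$ in $H^1(\Omega)$, and a diagonal argument completes the proof.

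The main obstacle — and the only place real care is needed — is the treatment of the finitely many points where two (or more) of the lines $L_1,\dots,L_J$ cross, and of the geometry near the boundary $\partial\Omega$ where $L$ exits $\Omega$. Near a crossing point the cylindrical-coordinate estimate around one axis must be meshed with that around another; this is handled by taking the cut-off $\zeta_\delta$ adapted to $L$ to be $\prod_j \zeta_\delta^{(j)}$ where $\zeta_\delta^{(j)}$ is the logarithmic cut-off around $L_j$, and noting that a product of functions each converging to $1$ in $H^1\cap L^\infty$ with uniformly bounded $L^\infty$-norms still converges to $1$ (using $\grad(\prod\zeta^{(j)})=\sum_j(\grad\zeta^{(j)})\prod_{k\ne j}\zeta^{(k)}$ and the boundedness $0\le\zeta^{(k)}\le1$). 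The boundary issue is immaterial because we first replace $u$ by the compactly supported $\tilde u$, so everything happens on a compact subset of $\Omega$ and the lines are there replaced by compact segments, of which there are finitely many.
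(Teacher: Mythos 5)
Your argument is correct, and it rests on the same underlying fact as the paper's proof — that a logarithmically graded cutoff on the tube $\{\delta^2\le r\le\delta\}$ around a line has $H^1$-energy of order $1/\log(1/\delta)$, so lines have zero $H^1$-capacity in $\Rt$ — but the implementation differs. The paper first replaces $f$ by a smooth compactly supported $f_0$, normalizes and splits it into nonnegative parts $f_\pm$, then forms the pointwise minimum $\min\{f_\pm,g_1,\dots,g_J\}$ with log-log barrier functions $g_j(x)=\min\{1,\chi(\dist(x,L_j))\}$, estimates $\|f_\pm-g_\pm\|_1$ on the union of tubes by coarse triangle-type bounds, and only at the very end mollifies to recover smoothness. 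You instead multiply the smooth approximant $\tilde u$ by a smooth radial cutoff $\zeta_\delta$ (and by the product $\prod_j\zeta_\delta^{(j)}$ for several lines), estimating via the product rule: $\|\tilde u\,\grad\zeta_\delta\|\lesssim\|\tilde u\|_\infty(\log(1/\delta))^{-1/2}$ and $\|(1-\zeta_\delta)\grad\tilde u\|\to 0$ by dominated convergence. Your route avoids the positive/negative splitting and the final mollification (the approximant is already in $C_0^\infty(\Omega\setminus L)$), at the price of using the $L^\infty$- and gradient bounds of $\tilde u$ — which is harmless since both proofs smooth first and hence both exploit boundedness of the approximant; the product construction also handles intersecting lines cleanly, so your opening partition-of-unity reduction (whose aside about removability of points is muddled) is dispensable rather than a gap. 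One cosmetic omission: for full $H^1$-convergence you should also record $\|(1-\zeta_\delta)\tilde u\|\to 0$, which follows exactly as for the gradient term.
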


\begin{proof}
Without loss of generality we assume that $\diam(\Omega)\le 1$ (otherwise we
scale $\Omega$).
Let $f\in H^1_0(\Omega)$ and $\epsilon>0$ be given.
We approximate $f$ in several steps.

{\bf 1.} Since $C^\infty_0(\Omega)\subset H^1_0(\Omega)$ is dense
we find $f_0\in C^\infty_0(\Omega)$ with $\|f-f_0\|_1<\epsilon/5$.

{\bf 2.} Let us assume that $|f_0|\le 1$ in $\Omega$ (for the other case see the end of the proof).
We approximate separately the non-negative and non-positive parts
$f_+:=\max\{0,f_0\}$ and $f_-:=f_+-f_0$ of $f_0$.
Note that $f_\pm\in H^1_0(\Omega)$, cf.~\cite[Theorem~4.4(iii)]{EvansG_15_MTF},
and $0\le f_\pm\le 1$.

{\bf 3.} The function $\log\log(1/|\cdot|)$ is a standard
example of an unbounded $H^1$-function on sufficiently small
two-dimensional neighborhoods of the origin.
We consider a parameter $0<\delta<1$ to be selected later, and define
\[
   \chi(r):=\Bigl(\log(-e\log\delta)-\log\log(1/r)\Bigr)_+
   \quad\text{for}\quad 0<r<1,
\]
with non-negative part $(\cdot)_+$.
It is elementary that $\chi$ is monotonically increasing in $(0,1)$,
$\chi=0$ in $[0,\delta^e]$, $0<\chi<1$ in $(\delta^e,\delta)$, $\chi(\delta)=1$,
and $\chi>1$ in $(\delta,1)$;
$e$ is Euler's number and $\delta^e$ means $\delta$ raised to the power $e$.
We continue to define
\[
   g_j(x):=\min\{1,\chi(\dist(x,L_j))\} \quad (x\in\Omega, j=1,\ldots,J).
\]
Transformation to cylindrical coordinates along $L_j$ reveals
\[
   \|g_j\|_{1,U_j}\le \sqrt{\pi\delta^2-2\pi/\log\delta}\quad\text{for}\quad
   U_j:=\{x\in\Omega;\;\dist(x,L_j)<\delta\} = \{x\in\Omega;\; g_j(x)<1\}
\]
($j=1,\ldots,J$) where we used that $\diam(\Omega)\le 1$.
We denote
\(
   U:=U_1\cup\ldots\cup U_J=\{x\in\Omega;\; \dist(x,L)<\delta\}
\)
and conclude for $j=1,\ldots,J$ that
\begin{align} \label{loglog_bound}
   \|g_j\|_{1,U}^2 \le \sum_{k=1}^J \|g_j\|_{1,U_k}^2
   \le J(\pi\delta^2-2\pi/\log\delta) + \sum_{k\not=j} |U_k| \to 0\quad
   \text{as}\ \delta\to 0\ \text{for}.
\end{align}

{\bf 4.} We define preliminary approximations of $f_\pm$ by
$g_\pm:=\min\{f_\pm,g_1,\ldots,g_J\}$ on $\Omega$.
They vanish in a neighborhood of $L\cap\Omega$, $\dist(\supp g_\pm, L)>0$,
satisfy $g_\pm\in H^1_0(\Omega)$ and
\[
   g_1(x)=\ldots=g_J(x)=1\ge f_\pm(x)=g_\pm(x)\quad
   \text{at}\ x\in\Omega\ \text{with}\ \dist(x,L)>\delta.
\]
Thus
\(
   \|f_\pm-g_\pm\|_1 = \|f_\pm-g_\pm\|_{1,U}.
\)
%
By the triangle inequality and the coarse bounds
$0\le g_\pm\le f_\pm+g_1+\ldots+g_J$,
$|\grad g_\pm|\le |\grad f_\pm| + |\grad g_1| + \ldots + |\grad g_J|$
(almost everywhere in $U$) we find with \eqref{loglog_bound} that
\begin{align*}
   \frac 12 \|f_\pm-g_\pm\|_{1,U}^2
   &\le \|f_\pm\|_{1,U}^2 + \|f_\pm+g_1+\ldots+g_J\|_U^2
                                 + \||\grad f_\pm| + |\grad g_1| + \ldots |\grad g_J|\|_U^2\\
   &\le \|f_\pm\|_{1,U}^2
            + (J+1)\bigl(\|f_\pm\|_{1,U}^2 + \|g_1\|_{1,U}^2 +\ldots+ \|g_J\|_{1,U}^2\bigr)\\
   &= (J+2)\|f_\pm\|_{1,U}^2 + (J+1)\Bigl(\|g_1\|_{1,U}^2 + \ldots + \|g_J\|_{1,U}^2\Bigr)
   \to 0\quad (\delta\to 0)
\end{align*}
because $f_\pm\in H^1(\Omega)$ and $|U|\to 0$ as $\delta\to 0$.
Therefore, $\|f_\pm-g_\pm\|_1<\epsilon/5$ for some fixed, sufficiently small $\delta>0$.
By standard mollification of $g_\pm$ we find $\wtilde g_\pm\in C^\infty_0(\Omega\setminus L)$ with
$\|g_\pm-\wtilde g_\pm\|_1\le \epsilon/5$.

\medskip
A combination of steps 1--4 shows that $\wtilde g:=\wtilde g_+-\wtilde g_-$ satisfies
$\wtilde g\in C^\infty_0(\Omega\setminus L)$ and
\[
   \|f-\wtilde g\|_1
   \le
   \|f-f_0\|_1 + \|f_+ -g_+\|_1 + \|f_- - g_-\|_1 + \|g_+-\wtilde g_+\|_1 + \|g_--\wtilde g_-\|_1
   \le \epsilon.
\]
This finishes the proof for the case $\|f_0\|_\infty\le 1$.
Otherwise we repeat steps 2 and 4 where $f_0$ is replaced with
$\wtilde f_0:=f_0/c_0$ for $c_0:=\|f_0\|_\infty<\infty$,
and where bound $\epsilon/5$ is reduced to $\epsilon/(5c_0)$.
In particular, $\wtilde f_0=f_+-f_-$ and $\wtilde g:=c_0(\wtilde g_+-\wtilde g_-)$ satisfies
$\wtilde g\in C^\infty_0(\Omega\setminus L)$ and
\[
   \|f-\wtilde g\|_1
   \le
   \|f-f_0\|_1
   + c_0
   \bigl(\|f_+ -g_+\|_1 + \|f_- - g_-\|_1 + \|g_+-\wtilde g_+\|_1 + \|g_--\wtilde g_-\|_1\Bigr)
   \le \epsilon.
\]
\end{proof}

\begin{prop}[inclusion] \label{prop_cont}
The inclusion
\begin{align*} 
   \wtilde H^{1/2}_{0,n}(\cS;\Rt)\oplus\wtilde H^{1/2}_{0,t}(\cS;\Rt) &\subset H^{1/2}_0(\cS;\Rt)
\end{align*}
is dense. It is strict if there is a face $F\in\cF(\Omega)$ that does not touch
the boundary. Moreover, every $\tau\in H(\div,\mesh;\SS)$ satisfies
\[
   \tau\in H(\div,\Omega;\SS)
   \quad\Leftrightarrow\quad
   \dual{\rho}{\tau}_\cS = 0\quad
   \forall \rho\in\wtilde H^{1/2}_{0,n}(\cS;\Rt)\oplus\wtilde H^{1/2}_{0,t}(\cS;\Rt).
\]
\end{prop}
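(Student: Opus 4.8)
The plan is to prove the three assertions in turn, using the density to drive the divergence characterization. Two facts about $\gamma$ are used repeatedly: it is bounded with $\|\gamma(w)\|_{H^{1/2}(\cS)}\le\|w\|_{1,\Omega}$, and it separates element–boundary traces, i.e.\ $\gamma(w)=\gamma(w')$ iff $w|_{\partial\el}=w'|_{\partial\el}$ for every $\el\in\mesh$ (test $\gamma(w-w')$ against $\tau\in H(\div,\mesh;\SS)$ supported in a single $\el$, whose normal trace $\tau n|_{\partial\el}$ is arbitrary). For \emph{density}, given $\varphi=\gamma(v)\in H^{1/2}_0(\cS;\Rt)$ with $v\in H^1_0(\Omega;\Rt)$ and $\epsilon>0$, I first apply Proposition~\ref{prop_Tartar} componentwise, with $L$ the union of the finitely many pairwise distinct lines carrying the edges of $\mesh$, to obtain $v'\in C^\infty_0(\Omega\setminus L;\Rt)$ with $\|v-v'\|_{1,\Omega}<\epsilon$. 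Then $\dist(\supp v',L)>0$, so $v'|_F$ vanishes near $\partial F$ for every $F\in\cF$, and $v'|_\Gamma=0$. On each $F$ fix a unit normal $n_F$ and form the face data $(v'|_F\cdot n_F)n_F$ and $\pi_t v'|_F$; both are independent of the sign of $n_F$ and vanish near $\partial F$, hence give continuous data on each $\partial\el$ that match across interior faces, so element-wise $H^1$-extension and gluing produce $v_n,v_t\in H^1_0(\Omega;\Rt)$ with $v_n|_F=(v'|_F\cdot n_F)n_F$ and $v_t|_F=\pi_t v'|_F$ for all $F\in\cF$. Since $(v_n+v_t)|_{\partial\el}=v'|_{\partial\el}$ we get $\gamma(v_n+v_t)=\gamma(v')$; since $v_n$ has vanishing tangential trace on every element face, $\gamma_t(v_n)=0$, so $\gamma(v_n)\in\wtilde H^{1/2}_{0,n}(\cS;\Rt)$ by \eqref{H_trace_nor}, and symmetrically $\gamma(v_t)\in\wtilde H^{1/2}_{0,t}(\cS;\Rt)$. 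As $\|\varphi-\gamma(v_n)-\gamma(v_t)\|_{H^{1/2}(\cS)}=\|\gamma(v-v')\|_{H^{1/2}(\cS)}\le\|v-v'\|_{1,\Omega}<\epsilon$, density follows.

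For \emph{strictness}, let $F_0\in\cF(\Omega)$ not meet $\Gamma$. Then $\overline{F_0}\subset\Omega$, so $F_0$ has an edge $e$ lying in the interior of $\Omega$; moreover $e$ is the common edge of $F_0$ and a second face $F_0'$ of some $\el\ni F_0$, with linearly independent unit normals $n,n'$. Pick $x_0$ in the relative interior of $e$ and $v\in C^\infty_0(\Omega;\Rt)$ supported in a small ball about $x_0$ with $v\equiv n$ near $x_0$; then $\gamma(v)\in H^{1/2}_0(\cS;\Rt)$. If $\gamma(v)=\gamma(v_n)+\gamma(v_t)$ with $v_n\in H^1_n(\Omega,\cS;\Rt)\cap H^1_0(\Omega;\Rt)$ and $v_t\in H^1_t(\Omega,\cS;\Rt)\cap H^1_0(\Omega;\Rt)$, then $(v_n+v_t)|_{\partial\el}=v|_{\partial\el}$, and resolving into normal/tangential parts on $F_0$ and on $F_0'$ forces $v_n|_{F_0}=(v|_{F_0}\cdot n)n$ and $v_n|_{F_0'}=(v|_{F_0'}\cdot n')n'$. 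Near $x_0$ these are the distinct constants $n$ and $(n\cdot n')n'$ (distinct because $|n\cdot n'|<1$), so $v_n|_{\partial\el}\in H^{1/2}(\partial\el;\Rt)$ would jump across $e$ — impossible, since an $H^{1/2}$-function on the two faces abutting $e$ must share its trace there. Hence $\gamma(v)$ is in $H^{1/2}_0(\cS;\Rt)$ but not in the direct sum.

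For the \emph{characterization}, if $\tau\in H(\div,\Omega;\SS)$ then for $\rho=\gamma(v)$ with $v\in H^1_0(\Omega;\Rt)$ symmetry of $\tau$ and integration by parts give $\dual{\rho}{\tau}_\cS=\vdual{\strain v}{\tau}+\vdual{v}{\div\tau}=0$, the boundary term vanishing because $v|_\Gamma=0$; in particular this holds on the direct sum. Conversely, for fixed $\tau\in H(\div,\mesh;\SS)$ the functional $\rho\mapsto\dual{\rho}{\tau}_\cS$ satisfies $|\dual{\rho}{\tau}_\cS|\le\|\rho\|_{H^{1/2}(\cS)}\|\tau\|_{\div,\mesh}$ and is therefore continuous on $H^{1/2}(\cS;\Rt)$; if it vanishes on the direct sum then, by the density just proved, $\dual{\gamma(v)}{\tau}_\cS=0$ for all $v\in H^1_0(\Omega;\Rt)$, hence for all $v\in C^\infty_0(\Omega;\Rt)$. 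Reading $\vdual{\strain v}{\tau}$ (with $\strain v:\tau=\grad v:\tau$) as minus the action of the distributional divergence of $\tau$ on $v$, this identity says that the distributional divergence of $\tau$ equals its piecewise divergence, which lies in $L_2(\Omega;\Rt)$; since $\tau$ is symmetric, $\tau\in H(\div,\Omega;\SS)$.

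The \emph{main obstacle} is the density step, precisely the realization of the face-wise normal and tangential parts of $v'$ by \emph{global} $H^1_0$-fields in the prescribed kernels: abutting faces carry different normals, so the face data are mutually incompatible unless they are reconciled at the edges, which is arranged exactly by first removing a neighbourhood of all edges via Proposition~\ref{prop_Tartar}. The same edge incompatibility, read in reverse (an $H^{1/2}$-function on two abutting faces cannot jump across the shared edge), is what makes the inclusion strict, so the two nontrivial parts of the statement rest on the same geometric mechanism.
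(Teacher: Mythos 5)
Your proposal is correct and follows essentially the route the paper intends: the paper's proof simply cites the two-dimensional argument of \cite{CarstensenH_NNC} with Tartar's density lemma replaced by Proposition~\ref{prop_Tartar}, and your write-up is exactly that argument carried out in 3D — approximate by a smooth field vanishing near the union of edge lines, split face-wise into normal and tangential data and re-extend to fields in the two kernels, use an interior edge with two distinct face normals for strictness, and obtain the divergence characterization from the density plus a distributional integration-by-parts argument.
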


\begin{proof}
The proof is analogous to the proof of \cite[Proposition~2, (3)]{CarstensenH_NNC}
by replacing density relation \cite[Lemma~17.3]{Tartar_07_ISS} with Proposition~\ref{prop_Tartar}.
\end{proof}

\subsection{Normal-normal continuous stress space and dual trace operator} \label{sec_trace_nn}

Duality \eqref{duality} and normal trace space \eqref{H_trace_nor} provide us
with the definition of a space of pointwise symmetric stresses with continuous normal-normal
components across $\cS$,
\[
   H_{nn}(\div,\mesh;\SS) :=
   \{\tau\in H(\div,\mesh;\SS);\;
     \dual{\varphi}{\tau}_\cS=0\ \forall \varphi\in \wtilde H^{1/2}_{0,n}(\cS;\Rt)\}.
\]
It is a closed subspace of $H(\div,\mesh;\SS)$.
The duality with this constrained space reduces the canonical trace operator $\gamma$ to
\begin{align*}
   \trnn:\;&\left\{\begin{array}{cll}
               H^1(\Omega;\Rt) &\rightarrow& H_{nn}(\div,\mesh;\SS)^*,\\
               v &\mapsto& \dual{\trnn(v)}{\tau}_\cS
               := \vdual{\strain{v}}{\tau} + \vdual{v}{\div\tau}_\mesh.
            \end{array}\right.
\end{align*}
It delivers tangential traces on interior faces and canonical traces on faces $F\subset\Gamma$.
The resulting trace spaces (the latter one including homogeneous Dirichlet data) are
\begin{align*}
   H^{1/2}_t(\cS;\Rt) &:= \trnn\bigl(H^1(\Omega;\Rt)\bigr)\quad\text{and}\quad
   H^{1/2}_{0,t}(\cS;\Rt) := \trnn\bigl(H^1_0(\Omega;\Rt)\bigr).
\end{align*}
It remains to specify norms.
For an element $\tau$ of the product space $H(\div,\mesh;\SS)$ we use the product norm
$\|\tau\|_{\div,\mesh}:=\Bigl(\sum_{\el\in\mesh}\|\tau\|_{\div,\el}^2\Bigr)^{1/2}$.
Trace space $H^{1/2}_{t}(\cS;\Rt)$ is endowed with the canonical trace norm
\[
   \|\rho\|_{1/2,t,\cS} :=
   \mathrm{inf}
   \{\|v\|_1;\; v\in H^1(\Omega;\Rt),\ \trnn(v)=\rho\},
   \quad\rho\in H^{1/2}_{t}(\cS;\Rt).
\]
The following conformity and norm relations are essential to develop and analyze
our mixed formulation of \eqref{model}.

\begin{prop}[duality] \label{prop_conf}
(i) Any $\tau\in H_{nn}(\div,\mesh;\SS)$ satisfies
\[
   \tau\in H(\div,\Omega;\SS)
   \quad\Leftrightarrow\quad
   \dual{\rho}{\tau}_\cS = 0\quad
   \forall \rho\in H^{1/2}_{0,t}(\cS;\Rt).
\]
(ii) Any $\rho\in H^{1/2}_{t}(\cS;\Rt)$ satisfies
\[
   \|\rho\|_{1/2,t,\cS} =
   \sup_{0\not=\tau\in H_{nn}(\div,\mesh;\SS),\; \vdual{\tr(\tau)}{1}=0}
   \frac {\dual{\rho}{\tau}_\cS}{\|\tau\|_{\div,\mesh}}.
\]
\end{prop}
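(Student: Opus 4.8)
The plan is to treat the two parts separately but to recognize that both rest on the density statement of Proposition~\ref{prop_cont}, namely that $\wtilde H^{1/2}_{0,n}(\cS;\Rt)\oplus\wtilde H^{1/2}_{0,t}(\cS;\Rt)$ is dense in $H^{1/2}_0(\cS;\Rt)$ together with the characterization of membership in $H(\div,\Omega;\SS)$ via vanishing of the skeleton pairing. For part~(i), start from the definition of $H_{nn}(\div,\mesh;\SS)$, which already encodes $\dual{\varphi}{\tau}_\cS=0$ for all $\varphi\in\wtilde H^{1/2}_{0,n}(\cS;\Rt)$. I would then observe that the operator $\trnn$ is obtained from $\gamma$ by restricting the test space from $H(\div,\mesh;\SS)$ to the subspace $H_{nn}(\div,\mesh;\SS)$, so that for $\tau\in H_{nn}(\div,\mesh;\SS)$ the value $\dual{\rho}{\tau}_\cS$ with $\rho=\trnn(v)$ coincides with $\dual{\gamma(v)}{\tau}_\cS$. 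Hence $\dual{\rho}{\tau}_\cS=0$ for all $\rho\in H^{1/2}_{0,t}(\cS;\Rt)$ is equivalent to $\dual{\varphi}{\tau}_\cS=0$ for all $\varphi\in\wtilde H^{1/2}_{0,t}(\cS;\Rt)$ (using that $\wtilde H^{1/2}_{0,t}(\cS;\Rt)=\gamma(H^1_t(\Omega,\cS;\Rt)\cap H^1_0(\Omega;\Rt))$ is exactly the image of the homogeneous Dirichlet functions under $\trnn$, after identifying it with its natural subspace of $H^{1/2}_{0,t}(\cS;\Rt)$). Combining the two orthogonality conditions — the one built into $H_{nn}$ and the one just derived — gives vanishing of $\dual{\cdot}{\tau}_\cS$ on the direct sum $\wtilde H^{1/2}_{0,n}\oplus\wtilde H^{1/2}_{0,t}$, and then the last equivalence of Proposition~\ref{prop_cont} yields $\tau\in H(\div,\Omega;\SS)$; the converse direction is immediate since $H(\div,\Omega;\SS)$-tensors pair to zero against all of $H^{1/2}_0(\cS;\Rt)$.

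For part~(ii), I would argue by a standard duality/closed-range argument. The $\le$ direction is the easy one: for any $v\in H^1(\Omega;\Rt)$ with $\trnn(v)=\rho$ and any admissible $\tau$ with $\vdual{\tr(\tau)}{1}=0$, the definition of the pairing gives $\dual{\rho}{\tau}_\cS=\vdual{\strain v}{\tau}+\vdual{v}{\div\tau}_\mesh=\vdual{\strain v}{\dev\tau}+\vdual{v}{\div\tau}_\mesh$, which is bounded by $\|v\|_1\,\|\tau\|_{\div,\mesh}$ after a Cauchy–Schwarz estimate (using that $\strain v$ pairs with $\tau$ only through its deviatoric part when $\tr\tau$ has zero mean — more carefully, one can drop the trace-constrained refinement here and simply bound by $\|v\|_1\|\tau\|_{\div,\mesh}$, then take the infimum over $v$). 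The delicate direction is $\ge$. Here I would fix $\rho\in H^{1/2}_t(\cS;\Rt)$, pick a near-minimizing $v$ in the infimum defining $\|\rho\|_{1/2,t,\cS}$, and construct a test tensor $\tau$ realizing (up to $\epsilon$) the ratio. The natural candidate is $\tau=\CC\strain v$ augmented by a divergence piece, but one must ensure $\tau\in H_{nn}(\div,\mesh;\SS)$ — i.e.\ that $\CC\strain v$ has normal-normal continuous traces — which it does since $v\in H^1(\Omega;\Rt)$ makes $\strain v\in H(\div,\Omega;\SS)\subset H_{nn}(\div,\mesh;\SS)$. One then needs $\dual{\rho}{\tau}_\cS$ to pick out $\|v\|_1^2$ (or a controlled fraction of it) while $\|\tau\|_{\div,\mesh}\lesssim\|v\|_1$; the trace constraint $\vdual{\tr\tau}{1}=0$ is arranged by subtracting a suitable multiple of $\id$, which does not affect the skeleton pairing against $\rho\in H^{1/2}_t$ because the added constant tensor is in $H(\div,\Omega;\SS)$ and pairs to zero. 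This part mirrors \cite[Proposition~3]{CarstensenH_NNC}, so I would invoke that argument with the 3D trace spaces substituted in.

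The main obstacle I anticipate is the $\ge$ inequality in (ii), specifically verifying that the constructed test tensor genuinely lies in $H_{nn}(\div,\mesh;\SS)$ \emph{and} that restricting the supremum to the trace-free subspace $\{\vdual{\tr\tau}{1}=0\}$ does not lose a constant — one has to check that the orthogonal complement one subtracts (a constant multiple of $\id$, which is in $H(\div,\Omega;\SS)$ hence pairs trivially with every $\rho\in H^{1/2}_t(\cS;\Rt)$ by part~(i)) leaves both the numerator and an equivalent form of the denominator unchanged. A secondary technical point is bookkeeping the identification of $\wtilde H^{1/2}_{0,t}(\cS;\Rt)$ with a subspace of $H^{1/2}_{0,t}(\cS;\Rt)$ under the reduced trace operator $\trnn$, which is where the duality against the constrained space $H_{nn}$ rather than all of $H(\div,\mesh;\SS)$ must be used carefully. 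Since all these steps are the literal 3D transcription of the plane case, I would keep the exposition short and defer the routine estimates to \cite{CarstensenH_NNC}.
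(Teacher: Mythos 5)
Your part~(i) is sound and is in substance what the paper does: the published proof simply transfers the two-dimensional argument of \cite{CarstensenH_NNC}, observing that Proposition~\ref{prop_cont} supplies the density of $\wtilde H^{1/2}_{0,n}(\cS;\Rt)\oplus\wtilde H^{1/2}_{0,t}(\cS;\Rt)$ in $H^{1/2}_0(\cS;\Rt)$ on which that argument rests.

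Part~(ii) contains two genuine gaps. First, your candidate test tensor is not admissible: for a generic (near-minimizing) $v\in H^1(\Omega;\Rt)$ the tensor $\strain{v}$ (or $\CC\strain{v}$) is \emph{not} in $H(\div,\Omega;\SS)$ --- its distributional divergence is only in $H^{-1}$ --- so it is not even in $H(\div,\mesh;\SS)$, let alone in $H_{nn}(\div,\mesh;\SS)$. What makes the construction work (this is the content of the abstract result in \cite{Heuer_GMP} that the paper invokes, and of the two-dimensional proof in \cite{CarstensenH_NNC}) is to take the \emph{exact} minimizer $v^*$ of $\|v\|_1$ subject to $\trnn(v)=\rho$: its Euler--Lagrange equation yields $\div\strain{v^*}=v^*$ on each element and, tested with $w\in H^1_n(\Omega,\cS;\Rt)\cap H^1_0(\Omega;\Rt)$, exactly the orthogonality that defines $H_{nn}(\div,\mesh;\SS)$; then $\tau_0:=\strain{v^*}$ is the Riesz representer of $\dual{\rho}{\cdot}_\cS$ with respect to the graph inner product and realizes the supremum with equality. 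A near-minimizer ``augmented by a divergence piece'' does not produce an element of $H_{nn}(\div,\mesh;\SS)$, so this step of your plan fails as written.

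Second, your disposal of the constraint $\vdual{\tr(\tau)}{1}=0$ is not justified by part~(i): that statement gives $\dual{\rho}{\tau}_\cS=0$ for $\tau\in H(\div,\Omega;\SS)$ only when $\rho\in H^{1/2}_{0,t}(\cS;\Rt)$, whereas in (ii) the trace $\rho$ ranges over $H^{1/2}_{t}(\cS;\Rt)$. Indeed $\dual{\rho}{\id}_\cS=\vdual{\div v}{1}_\mesh=\int_\Gamma v\cdot n\,ds$ for any extension $v$ of $\rho$, which in general does not vanish, so subtracting a multiple of $\id$ does change the numerator (and in any case changes the denominator, which must also be tracked). The constraint has to be handled as in the two-dimensional proof: for the exact minimizer one checks when $\vdual{\tr(\strain{v^*})}{1}=\int_\Gamma v^*\cdot n\,ds$ vanishes, which is automatic precisely for the homogeneous traces $\rho\in H^{1/2}_{0,t}(\cS;\Rt)$ that are actually used in the inf--sup argument of Theorem~\ref{thm_VF}; an appeal to part~(i) for general $\rho\in H^{1/2}_{t}(\cS;\Rt)$ is circular and, as the identity above shows, false.
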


\begin{proof}
In two dimensions, statement (i) is \cite[Lemma~5]{CarstensenH_NNC}.
Its proof is based on a decomposition of (a dense subspace of)
$H^{1/2}(\cS;\R^2)$ into normal and tangential trace spaces. Proposition~\ref{prop_cont}
does provide such a decomposition in three dimensions and allows to apply the same
technique as in \cite{CarstensenH_NNC}.
For an abstract setting of this result see \cite[Lemma~27]{Heuer_GMP}.
Also statement (ii) is proved analogously to the two-dimensional
case \cite[Proposition~6]{CarstensenH_NNC}, and for an abstract form
(without the trace constraint for test functions), see \cite[Lemma~26]{Heuer_GMP}.
\end{proof}

\section{Mixed formulation} \label{sec_mixed}

An application of trace operator $\gamma_{1,nn}$ and the conformity characterization
given by Proposition~\ref{prop_conf}(i) lead to the following mixed formulation
of \eqref{model}.
\emph{Find $\sigma\in H_{nn}(\div,\mesh;\SS)$, $u\in L_2(\Omega;\Rt)$, and
$\eta\in H^{1/2}_{0,t}(\cS;\Rt)$ such that}
\begin{subequations} \label{VF}
\begin{alignat}{3}
   &\vdual{\AA\sigma}{\dsigma} + \vdual{u}{\div\dsigma}_\mesh - \dual{\eta}{\dsigma}_\cS
   &&= 0, \label{VFa}\\
   &\vdual{\div\sigma}{\du}_\mesh - \dual{\deta}{\sigma}_\cS
   &&= -\vdual{f}{\du} \label{VFb}
\end{alignat}
\end{subequations}
\emph{for any $\dsigma\in H_{nn}(\div,\mesh;\SS)$, $\du\in L_2(\Omega;\Rt)$, and
$\deta\in H^{1/2}_{0,t}(\cS;\Rt)$.}

We show its well-posedness.

\begin{theorem}[well-posedness of mixed formulation] \label{thm_VF}
Given $f\in L_2(\Omega;\Rt)$, problem \eqref{VF} is well posed
with a unique solution $(\sigma,u,\eta)$.
The couple $(u,\sigma)\in H^1(\Omega)\times H(\div,\Omega;\SS)$ solves \eqref{model},
$\eta=\trnn(u)$, and
\begin{align*}
   \|\sigma\|_{\div,\mesh}
   + \|u\| + \|\eta\|_{1/2,t,\cS} &\lesssim \|f\|
\end{align*}
holds uniformly with respect to $\lambda\in (0,\infty)$ and $\mesh$.
\end{theorem}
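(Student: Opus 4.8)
The plan is to verify the hypotheses of the standard Brezzi saddle-point theory for the mixed system \eqref{VF}, viewed as a problem with primary unknown $\sigma\in H_{nn}(\div,\mesh;\SS)$ constrained against the pair $(u,\eta)\in L_2(\Omega;\Rt)\times H^{1/2}_{0,t}(\cS;\Rt)$. First I would record that the bilinear form $a(\sigma,\dsigma):=\vdual{\AA\sigma}{\dsigma}$ is bounded and, crucially, that $\AA$ is positive definite with a lower bound depending only on $\mu$ (not on $\lambda$): on the deviatoric part $\AA$ behaves like $(2\mu)^{-1}\id$, so $\vdual{\AA\tau}{\tau}\gtrsim \|\dev\tau\|^2$ uniformly in $\lambda$. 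This is the source of Poisson-ratio robustness and is why the kernel coercivity must be obtained on the $\div$-graph norm using $\|\dev\tau\|$ together with $\|\div\tau\|$ and a suitable Poincaré/Korn-type argument, rather than from $\AA$ alone.

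The second ingredient is the inf-sup condition for the off-diagonal form $b(\dsigma;(v,\zeta)):=\vdual{v}{\div\dsigma}_\mesh-\dual{\zeta}{\dsigma}_\cS$. Here the key identity is that for $\tau\in H_{nn}(\div,\mesh;\SS)$ the map $\tau\mapsto(\div\tau,\text{its skeleton action})$ is, up to the normal-normal constraint, precisely the datum measured by $\trnn$; Proposition~\ref{prop_conf}(ii) gives exactly the surjectivity/continuity statement needed, namely that $\|\zeta\|_{1/2,t,\cS}$ equals the supremum of $\dual{\zeta}{\tau}_\cS/\|\tau\|_{\div,\mesh}$ over trace-free $\tau\in H_{nn}(\div,\mesh;\SS)$. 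To handle the full pair $(v,\zeta)$ I would first pick $\tau$ so that $\div\tau=v$ (solving $-\div\tau=-v$ with a symmetric $H(\div,\Omega;\SS)$ tensor, e.g. $\tau=\CC\strain{w}$ for $w\in H^1_0$ solving linear elasticity with load $v$, which lies in $H_{nn}$ and has vanishing skeleton action on $H^{1/2}_{0,t}$), then correct with a trace-free tensor supplied by Proposition~\ref{prop_conf}(ii) to realize the prescribed $\zeta$ without disturbing $\div\tau$. Combining the two yields the global inf-sup constant, uniform in $\lambda$ and $\mesh$.

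The kernel coercivity is the step I expect to be the main obstacle. The kernel is $\{\tau\in H_{nn}(\div,\mesh;\SS):\vdual{v}{\div\tau}_\mesh=\dual{\zeta}{\tau}_\cS\ \forall(v,\zeta)\}$; taking $v$ arbitrary forces $\div\tau=0$ piecewise, and taking $\zeta$ arbitrary in $H^{1/2}_{0,t}(\cS;\Rt)$ forces, via Proposition~\ref{prop_conf}(i), that $\tau\in H(\div,\Omega;\SS)$ — so on the kernel $\tau$ is a genuine divergence-free symmetric $H(\div)$ tensor and $\|\tau\|_{\div,\mesh}=\|\tau\|$. I then need $\vdual{\AA\tau}{\tau}\gtrsim\|\tau\|^2$ uniformly in $\lambda$ for such $\tau$; since $\vdual{\AA\tau}{\tau}\gtrsim\|\dev\tau\|^2$, it suffices to bound $\|\tr\tau\|\lesssim\|\dev\tau\|$ for divergence-free $\tau\in H(\div,\Omega;\SS)$ with, say, $\vdual{\tr\tau}{1}=0$ (the constraint appearing in the test space). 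This is the standard "deviatoric bounds the full tensor for divergence-free, mean-zero-trace stresses" lemma; I would prove it by contradiction/compactness or by writing $\tr\tau$ in terms of $\dev\tau$ through the equilibrium equation $\div\tau=0$, as is done in the plane case of \cite{CarstensenH_NNC}. Once coercivity on the kernel and the global inf-sup are established, Brezzi's theorem gives existence, uniqueness, and the stability bound $\|\sigma\|_{\div,\mesh}+\|u\|+\|\eta\|_{1/2,t,\cS}\lesssim\|f\|$. Finally I would identify the solution with that of \eqref{model}: \eqref{VFb} with arbitrary $\du$ gives $\div\sigma=-f$ piecewise, \eqref{VFb} with arbitrary $\deta$ together with Proposition~\ref{prop_conf}(i) gives $\sigma\in H(\div,\Omega;\SS)$ hence $-\div\sigma=f$ in $\Omega$; \eqref{VFa} then identifies $u\in H^1_0(\Omega;\Rt)$ with $\AA\sigma=\strain{u}$, i.e. $\sigma=\CC\strain u$, and $\eta=\trnn(u)$, using the definition of $\trnn$ and an integration by parts against test tensors.
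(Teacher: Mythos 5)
Your overall route is the paper's: Brezzi theory with the compliance form bounded below on deviatoric parts, inf-sup for the Lagrange multipliers from the surjectivity of $\div:H(\div,\Omega;\SS)\to L_2(\Omega;\Rt)$ together with Proposition~\ref{prop_conf}(ii), kernel characterization via Proposition~\ref{prop_conf}(i), and a tr--dev--div bound to get $\lambda$-uniform kernel coercivity (the paper cites this bound from \cite[Theorem~1]{CarstensenH_FOT}). The genuine gap is where you write that the mean-zero-trace condition $\vdual{\tr(\tau)}{1}=0$ is ``the constraint appearing in the test space'': it does not appear in \eqref{VF}, which is posed over all of $H_{nn}(\div,\mesh;\SS)$. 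The kernel you compute (divergence-free tensors in $H(\div,\Omega;\SS)$) therefore contains all constant multiples of $\id$, and for $\tau=c\,\id$ one has $\dev\tau=0$ and $\vdual{\AA\tau}{\tau}=\|\tr(\tau)\|^2/\bigl(3(3\lambda+2\mu)\bigr)$, which degenerates as $\lambda\to\infty$; uniform coercivity on that kernel is simply false, and your lemma $\|\tr(\tau)\|\lesssim\|\dev\tau\|$ is not applicable without the mean-zero constraint. The missing idea is the paper's first step: test \eqref{VFa} with $\dsigma=\id$ and use $\dual{\eta}{\id}_\cS=\dual{\eta}{n}_\Gamma=0$ for $\eta\in H^{1/2}_{0,t}(\cS;\Rt)$ to conclude $\vdual{\tr(\sigma)}{1}=0$, whence \eqref{VF} is equivalent to the same system posed over $\wtilde H_{nn}(\div,\mesh;\SS)=\{\tau\in H_{nn}(\div,\mesh;\SS):\vdual{\tr(\tau)}{1}=0\}$; only on this constrained space does your Brezzi argument deliver the $\lambda$-uniform stability bound (and uniqueness of $\sigma$ in the original space also relies on this observation).

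A secondary imprecision: the near-maximizer supplied by Proposition~\ref{prop_conf}(ii) is not divergence-free in general, so your correction step does disturb $\vdual{v}{\div\tau}_\mesh$. This is repairable either by the standard weighted combination of the two inf-sup conditions with Young's inequality --- which is exactly what the paper delegates to \cite[Theorem~3.3]{CarstensenDG_16_BSF}, using Proposition~\ref{prop_conf}(i) to see that globally conforming tensors have vanishing skeleton action --- or by first subtracting from the correction a tensor in $H(\div,\Omega;\SS)$ with the same piecewise divergence (whose skeleton pairing with $H^{1/2}_{0,t}(\cS;\Rt)$ vanishes by Proposition~\ref{prop_conf}(i)). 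With these two points fixed, your argument coincides with the paper's proof.
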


\begin{proof}
The proof uses the standard ingredients of mixed formulations, and is
analogous to the two-dimensional case studied in \cite[Theorem~10]{CarstensenH_NNC}.
We recall the main steps and give some more details on the uniform stability.

We need the representation
\begin{align} \label{coercive}
   \vdual{\AA\tau}{\dtau}
   =
   \frac 1{2\mu} \vdual{\dev\tau}{\dev\dtau}
   +
   \frac 1{3(3\lambda+2\mu)} \vdual{\tr(\tau)}{\tr(\dtau)}
   \quad\forall \tau,\dtau\in L_2(\Omega;\SS),
\end{align}
see, e.g., \cite[(9.1.8)]{BoffiBF_13_MFE}.
The selection of $\dsigma=\id$ in \eqref{VFa} and relation
$\dual{\eta}{\id}_\cS=\dual{\eta}{n}_\Gamma=0$ for $\eta\in H^{1/2}_{0,t}(\cS;\Rt)$
show that $\vdual{\tr(\sigma)}{1}=0$. We conclude that formulation \eqref{VF}
is equivalent to the same system where $H_{nn}(\div,\mesh;\SS)$ is replaced with
\[
   \wtilde H_{nn}(\div,\mesh;\SS) := \{
   \tau\in H_{nn}(\div,\mesh;\SS);\; \vdual{\tr(\tau)}{1}=0\}.
\]
We continue to analyze this formulation.

The bilinear and linear forms of \eqref{VF} are uniformly bounded by the selection
of norms. Inf-sup properties
\begin{align*} 
   \sup_{0\not=\tau\in H(\div,\Omega;\SS)}
   \frac {\vdual{\div\tau}{\du}}{\|\tau\|_{\div}}
   \gtrsim
   \|\du\|\quad\forall\du\in L_2(\Omega;\Rt)
\end{align*}
and
\begin{align*} 
   \sup_{0\not=\tau\in \wtilde H_{nn}(\div,\mesh;\SS)}
   \frac {\dual{\deta}{\tau}_\cS}{\|\tau\|_{\div,\mesh}}
   \ge
   \|\deta\|_{1/2,t,\cS}\quad\forall\deta\in H^{1/2}_{0,t}(\cS;\Rt)
\end{align*}
hold by surjectivity of $\div:\;H(\div,\Omega;\SS)\to L_2(\Omega;\Rt)$
and Proposition~\ref{prop_conf}(ii), respectively.
The conformity relation of Proposition~\ref{prop_conf}(i)
and \cite[Theorem~3.3]{CarstensenDG_16_BSF} then show the required combined inf-sup relation
\begin{align*}  
   \sup_{0\not=\tau\in \wtilde H_{nn}(\div,\mesh;\SS)}
   \frac {\vdual{\div\tau}{\du}_\mesh -\dual{\deta}{\tau}_\cS}{\|\tau\|_{\div,\mesh}}
   \gtrsim
   \|\du\| + \|\deta\|_{1/2,t,\cS}\quad
\end{align*}
for any $\du\in L_2(\Omega;\Rt)$ and $\deta\in H^{1/2}_{0,t}(\cS;\Rt)$.

For the uniform well-posedness of \eqref{VF} it remains to verify the
uniform coercivity of bilinear form $\vdual{\AA\cdot}{\cdot}$ on the kernel
\begin{align*}
   K_0 :=
   &\{\tau\in \wtilde H_{nn}(\div,\mesh;\SS);\;
     \vdual{\div\tau}{\du}_\mesh=0\ \forall\du\in L_2(\Omega;\Rt),\;
     \dual{\deta}{\tau}_\cS=0\ \forall\deta\in H^{1/2}_{0,t}(\cS;\Rt)\}\\
   = &\{\tau\in H(\div,\Omega;\SS);\; \div\tau=0,\ \vdual{\tr(\tau)}{1}=0\}.
\end{align*}
The latter identity is due to Proposition~\ref{prop_conf}(i).
Critical is the bound
\[
   \|\tr(\tau)\| \le
   C \bigl( \|\dev\tau\| + \|\div\tau\|_{-1}\bigr)
   = C \|\dev\tau\|\quad\forall \tau\in K_0
\]
for a constant $C>0$, see \cite[Theorem~1]{CarstensenH_FOT}.
The uniform coercivity of $\vdual{\AA\cdot}{\cdot}$ on $K_0$
then follows from \eqref{coercive}.
The fact that $u\in H^1(\Omega;\Rt)$ and $\sigma\in H(\div,\Omega;\SS)$ solve
\eqref{model} is immediate, and
$\eta=\trnn(u)$ holds by \eqref{VFa} and the definition of $\trnn$.
\end{proof}
\section{Finite element approximation} \label{sec_fem}

Let us introduce some polynomial related notation.
For $K\in\cF\cup\mesh$ and $U\in\{\R,\Rt,\SS\}$ we denote by $P^k(K;U)$
the space of polynomials of degree $k$ on $K$ with values in $U$.
The piecewise polynomial spaces are $P^k(\mesh;U)$. We drop $U$ when $U=\R$.

\subsection{An $H(\div;\SS)$ element} \label{sec_el}

Let $T\subset\Rt$ be a tetrahedron with vertices $x_i$, barycentric coordinates $\lambda_i$,
faces $F_i$ opposite to $x_i$, and unit exterior normal vectors $n_i$ on $F_i$
(we also employ the generic notation $n$),
and denote $\wtilde n_i:=\grad\lambda_i\in\mathrm{span}\{n_i\}$, $i=1,\ldots,4$.
We also need unit vectors $e_{i,j}$ (in a certain direction) generated by the edges
$\overline{F}_i\cap\overline{F}_j$, $i\not=j=1,\ldots,4$.
All these simplex-related objects are numbered modulus $4$, e.g., $x_5=x_1$.
We abbreviate $a\odot b:= (a b^\top+b a^\top)/2\in\SS$ for vectors $a,b\in\Rt$.

The construction of our $H(\div,\el;\SS)$-element is based upon the constant tensors
\[
   T_i:=\frac{e_{i+1,i+2}\odot e_{i+1,i+3}}{n_i\cdot e_{i+1,i+2}\; n_i\cdot e_{i+1,i+3}}
   \quad (i=1,\ldots,4),\quad
   T_5:= e_{1,2}\odot e_{3,4},\quad
   T_6:= e_{1,3}\odot e_{2,4}
\]
which from a basis of $\SS$ (as seen below in Lemma~\ref{la_SS}),
and the polynomial tensor spaces
\begin{align*}
   P^0_{nn}(\el;\SS) &:= \mathrm{span}\bigl\{T_1,\ldots,T_4\bigr\},\quad
   P^0_0(\el;\SS) := \mathrm{span}\bigl\{T_5,T_6\bigr\},\\
  P^{2,k}(\el,\SS)
  &:=\{\tau\in P^2(\el;\SS);\; n\cdot\tau n|_F\in P^k(F),\ F\in\cF(\el)\},
  \quad k\in\{0,1\},\\
  P^2_0(\el;\SS)&:=\{\tau\in P^2(\el;\SS);\; n\cdot\tau n|_{\partial\el}=0\}.
\end{align*}
{\bf Remark.}
\emph{The construction of $T_i\in\SS$ is such that
$T_i n_{i+1}=0$, $i=1,\ldots,4$, that is, the image of mapping
$T_i:\;\Rt\to\Rt$ is the plane through the origin that is parallel to face $F_{i+1}$.
Furthermore, $n_j\cdot T_i n_j=0$ for $j=1,\ldots,4$, $i=5,6$, and
we will see that $n_j\cdot T_i n_j=\delta_{ij}$ for $i,j=1,\ldots,4$.}

Our $H(\div,\el;\SS)$ finite element on the tetrahedron $\el$ is the vector space
\begin{equation} \label{element}
   \XX{\el} :=
   \bigl(P^2(\el) P^0_{nn}(\el;\SS)\cap P^{2,1}(\el,\SS)\bigr)\oplus P^0_0(\el;\SS)
\end{equation}
and the following degrees of freedom,
\begin{subequations} \label{dof_el}
\begin{alignat}{3}
   \label{dof_el_nn}
   &|F| \dual{n\cdot\tau n}{q}_{F}\quad &&(q\in P^1(F),\ F\in\cF(\el)),\\
   \label{dof_el_id}
   &\vdual{\tau}{c}_\el        &&(c\in P^0(\el;\SS)),\\
   \label{dof_el_div}
   &\vdual{\div\tau}{v}_\el    &&(v\in P^1(\el;\Rt)).
\end{alignat}
\end{subequations}
The proof that ($\el$,$\XX{\el}$,\eqref{dof_el}) is a finite element requires some preparation.

\begin{lemma} \label{la_SS} We have
\(
   P^0(\el;\SS) = P^0_{nn}(\el;\SS) \oplus P^0_0(\el;\SS).
\)
\end{lemma}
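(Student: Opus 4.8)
The plan is to show that the six tensors $T_1,\ldots,T_6$ form a basis of $\SS$; since $\dim\SS = 6$, it suffices to prove they are linearly independent. Once that is established, the direct sum decomposition $P^0(\el;\SS) = P^0_{nn}(\el;\SS)\oplus P^0_0(\el;\SS)$ follows immediately from the definitions $P^0_{nn}(\el;\SS) = \mathrm{span}\{T_1,\ldots,T_4\}$ and $P^0_0(\el;\SS) = \mathrm{span}\{T_5,T_6\}$, because a spanning set of six vectors in a six-dimensional space is automatically a basis, and splitting the basis gives the direct sum.

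First I would exploit the structural facts recorded in the Remark preceding the lemma. The key separating functionals are the four scalar bilinear evaluations $\tau\mapsto n_j\cdot\tau\, n_j$ for $j=1,\ldots,4$. The Remark asserts (and an elementary computation with $T_i = (e_{i+1,i+2}\odot e_{i+1,i+3})/(n_i\cdot e_{i+1,i+2}\; n_i\cdot e_{i+1,i+3})$ confirms) that $n_j\cdot T_i\, n_j = \delta_{ij}$ for $i,j\in\{1,\ldots,4\}$, using that $e_{i+1,i+2}$ and $e_{i+1,i+3}$ both lie in the plane of $F_{i+1}$ hence are orthogonal to $n_{i+1}$, which kills $T_i n_{i+1}$; for $j\neq i, i+1$ one checks the edge direction $e_{i+1,i+2}$ or $e_{i+1,i+3}$ lies in face $F_j$ so is orthogonal to $n_j$. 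Likewise $n_j\cdot T_i\, n_j = 0$ for $i\in\{5,6\}$ and all $j$, since $T_5 = e_{1,2}\odot e_{3,4}$ and $T_6 = e_{1,3}\odot e_{2,4}$ are built from edges each of which lies in some face, so that for every $j$ at least one factor is orthogonal to $n_j$ (more precisely, every vertex $x_j$ is an endpoint of one of the two edges, so the opposite face $F_j$ contains the other edge).

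Now suppose $\sum_{i=1}^6 c_i T_i = 0$. Applying $n_j\cdot(\cdot)\,n_j$ for each $j=1,\ldots,4$ and using the above gives $c_j = 0$ for $j=1,\ldots,4$. It remains to see $c_5 T_5 + c_6 T_6 = 0$ forces $c_5 = c_6 = 0$, i.e. that $T_5$ and $T_6$ are linearly independent in $\SS$. I would argue this by a second batch of test functionals: evaluate against a suitable vector $v$ in the plane of a face, for instance compute $e_{1,2}\cdot T_5\, e_{3,4}$ versus $e_{1,2}\cdot T_6\, e_{3,4}$, or more cleanly observe that the two edge pairs $\{e_{1,2}, e_{3,4}\}$ and $\{e_{1,3}, e_{2,4}\}$ of opposite edges of the tetrahedron span non-proportional symmetric rank-two tensors because the four edge directions involved are pairwise distinct and in general position. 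A clean way: pick the linear functional $\tau \mapsto e_{3,4}\cdot\tau\,e_{3,4}$; then $e_{3,4}\cdot T_5\, e_{3,4} = (e_{1,2}\cdot e_{3,4})(e_{3,4}\cdot e_{3,4})$ while $e_{3,4}\cdot T_6\, e_{3,4} = (e_{1,3}\cdot e_{3,4})(e_{2,4}\cdot e_{3,4})$, and choosing analogously $\tau\mapsto e_{1,2}\cdot\tau\,e_{1,2}$ one sets up a $2\times 2$ system whose determinant is nonzero for a nondegenerate tetrahedron; alternatively use that $T_5$ vanishes on the edge direction $e_{1,3}$ paired appropriately whereas $T_6$ does not.

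The main obstacle I anticipate is the last step — verifying $T_5, T_6$ are linearly independent — because unlike $T_1,\ldots,T_4$ they are not separated by the diagonal functionals $n_j\cdot(\cdot)\,n_j$, so one must descend into the geometry of the tetrahedron's three pairs of opposite edges and check a genuine non-degeneracy condition. (Note that $T_5, T_6$ together with the missing pair $e_{1,4}\odot e_{2,3}$ would be the three "opposite-edge" tensors; the point is that only two of these three, together with $T_1,\ldots,T_4$, are needed and are independent.) I would handle this by an explicit but short coordinate computation on a reference tetrahedron, then invoke affine invariance of linear independence, or equivalently argue directly that a vanishing combination $c_5 T_5 + c_6 T_6 = 0$ contradicts the distinctness of the edge directions $e_{1,2}, e_{3,4}, e_{1,3}, e_{2,4}$. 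Everything else is bookkeeping with the Remark's identities.
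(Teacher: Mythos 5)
Your overall route coincides with the paper's: prove that $T_1,\ldots,T_6$ are linearly independent (hence a basis of the six-dimensional space $P^0(T;\mathbb S)$), using $n_j\cdot T_i n_j=\delta_{ij}$ for $i,j\le 4$ and $n_j\cdot T_i n_j=0$ for $i\in\{5,6\}$ to reduce everything to the linear independence of $T_5$ and $T_6$; the splitting then follows from the definitions of the two subspaces. Up to that reduction your argument is correct and matches the paper's proof in substance.

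The gap is precisely in the step you yourself flag as delicate. Your ``clean'' $2\times2$ system built from $\tau\mapsto e_{3,4}\cdot\tau\,e_{3,4}$ and $\tau\mapsto e_{1,2}\cdot\tau\,e_{1,2}$ fails: both entries of the $T_5$-column are multiples of $e_{1,2}\cdot e_{3,4}$, and opposite edges of a tetrahedron can be orthogonal --- they are orthogonal for the regular tetrahedron and for the paper's reference tetrahedron, where $e_{1,2}=(0,1,-1)^\top$ and $e_{3,4}=(1,0,0)^\top$ --- so the determinant vanishes exactly in the most standard cases, and the claim that it is nonzero for every nondegenerate tetrahedron is false. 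Merely comparing $e_{1,2}\cdot T_5\, e_{3,4}$ with $e_{1,2}\cdot T_6\, e_{3,4}$ proves nothing either, since in general both numbers are nonzero; and the appeal to the edge directions being ``pairwise distinct and in general position'' is not an argument --- the relevant fact would be that the planes $\mathrm{span}\{e_{1,2},e_{3,4}\}$ and $\mathrm{span}\{e_{1,3},e_{2,4}\}$ differ (which does follow from nondegeneracy, but you do not prove it). The paper settles the step with one off-diagonal normal--normal evaluation: $n_3\cdot T_6\, n_2=0$ because $e_{1,3}\perp n_3$ and $e_{2,4}\perp n_2$, whereas $n_3\cdot T_5\, n_2$ equals a nonzero multiple of $(n_3\cdot e_{1,2})(e_{3,4}\cdot n_2)\neq 0$ since $e_{1,2}$ is not tangential to $F_3$ and $e_{3,4}$ is not tangential to $F_2$; this forces $c_5=0$, and then $c_6=0$ because $T_6\neq 0$. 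Your fallback of an explicit reference-element computation transported by the affine map would also work (note that $B_T$ carries reference edge directions to nonzero multiples of the edge directions of $T$, so independence is preserved), but as written the independence of $T_5$ and $T_6$ is not established.
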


\begin{proof}
We start by proving that $\{T_1,\ldots,T_6\}$ is a basis of $P^0(\el;\SS)$.
Since $\dim P^0(\el;\SS)=6$ and $T_i\in P^0(\el;\SS)$, $i=1,\ldots,6$, it suffices
to show that the tensors $T_i$ are linearly independent.

For every $i\in\{1,\ldots,4\}$,
at least one of the two vectors $e_{i+1,i+2}$ and $e_{i+1,i+3}$ is tangential to the faces
$F_j$ for $j\in\{i+1,i+2,i+3\}$. Furthermore, both vectors are not tangential to $F_i$.
Consequently
\[
   (n_i\cdot e_{i+1,i+2})(n_i\cdot e_{i+2,i+3})\not=0 \quad\text{and}\quad
   (n_j\cdot e_{i+1,i+2})(n_j\cdot e_{i+1,i+3})=0\quad \text{for}\ j\not=i.
\]
Therefore, tensors $T_i$ are well defined and satisfy
\[
   n_j\cdot T_i n_j=\delta_{ij}\quad\text{for}\ i,j=1,\ldots,4.
\]
On the other hand, for each face $F_i$, one of the vectors $e_{1,2},e_{3,4}$
and one of the vectors $e_{1,3},e_{2,4}$ is tangential to $F_i$.
This implies that
\[
   n_j\cdot T_i n_j=0\quad\text{for}\ i=5,6,\ j=1,\ldots,4.
\]
It follows that $T_1,\ldots,T_6$ are linearly independent, and thus form a basis
of $P^0(\el;\SS)$, if $T_5$ and $T_6$ are linearly independent.
The latter is true because
\[
   n_3\cdot T_5 n_2 = n_3\cdot e_{1,2}\; e_{3,4}\cdot n_2\not=0
   \quad\text{but}\quad
   n_3\cdot T_6 n_2 = n_3\cdot e_{1,3}\; e_{2,4}\cdot n_2=0.
\]
The splitting of $P^0(\el;\SS)$ then holds by definition of the two subspaces.
\end{proof}

We use a transformation of $\el\in\mesh$ to a reference tetrahedron $\elref$
to show that \eqref{dof_el} are degrees of freedom of $\XX{\el}$.
We consider the tetrahedron $\elref$ with vertices
$\xref_1=(0,0,0)$, $\xref_2=(1,0,0)$, $\xref_3=(0,1,0)$, $\xref_4=(0,0,1)$,
and let $G_\el:\;\elref\to\el$ denote the affine diffeomorphism
\[
   \begin{pmatrix}\xref\\ \yref\\ \zref\end{pmatrix} \mapsto
   a_\el + B_\el \begin{pmatrix}\xref\\ \yref\\ \zref\end{pmatrix}
\]
with $a_\el\in\Rt$, $B_\el\in\Rtt$, and set $J_\el:=\det(B_\el)$.
We use the generic notation $\nref$ for exterior normal vectors
on faces $\Fref\in\cF(\elref)$.
Tensors $\tauref,\,\sigmaref:\;\elref\to\SS$ and vector functions $\vref:\;\elref\to\Rt$
are transformed onto elements $\el\in\mesh$ as
$\PiolaK{\el}(\tauref)=\tau$,
$\PiolaKc{\el}(\sigmaref)=\sigma$ and $\PiolaCurl{\el}(\vref)=v$, where
\[
   J_\el^2\tau\circ G_\el := B_\el\tauref B_\el^\top,\quad
   \sigma\circ G_\el := J_\el B_\el^{-\top}\sigmaref B_\el^{-1},
   \quad\text{and}\quad
   v\circ G_\el:=J_\el B_\el^{-\top}\vref
   \quad\text{on}\ \elref.
\]
These are re-scaled Piola--Kirchhoff and Piola transformations.
They conserve the properties of space $\XX{\el}$ and degrees of freedom
\eqref{dof_el}, as seen next.

\begin{lemma} \label{la_trafo}
Let $\el\in\mesh$ and $\tau\in L_2(\el;\SS)$ be given with $\tau=\PiolaK{\el}(\tauref)$.

(i) Property $\tau\in\XX{\el}$ holds if and only if $\tauref\in\XX{\elref}$.

(ii) For $F\in\cF(\el)$, $q\in P^2(F)$, $c\in P^0(\el;\SS)$, and $v\in P^1(\el;\Rt)$ let
     $\Fref\in\cF(\elref)$, $\widehat q\in P^2(\widehat F)$, $\widehat c\in P^0(\elref;\SS)$, and
     $\widehat v\in P^1(\elref;\Rt)$ be the transformed objects with
     $F=G_\el(\Fref)$, $q\circ G_\el=\widehat q$, $c=\PiolaKc{\el}(\widehat c)$,
     and $v=\PiolaCurl{\el}(\vref)$. Then,
\begin{align*}
   &|F|\dual{n\cdot\tau n}{q}_{F}
   = |\Fref|\dual{\nref\cdot\tauref\nref}{\widehat q}_{\Fref},\quad
   \vdual{\tau}{c}_{\el}=\vdual{\tauref}{\widehat c}_\elref,\quad
   \vdual{\div\tau}{v}_{\el}=\vdual{\divref\tauref}{\vref}_\elref.
\end{align*}
\end{lemma}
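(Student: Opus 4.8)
The plan is to verify the two claims of Lemma~\ref{la_trafo} by direct computation with the three (re-scaled) transformations $\PiolaK{\el}$, $\PiolaKc{\el}$, and $\PiolaCurl{\el}$, exploiting the standard chain-rule behaviour of the divergence under Piola maps and the fact that the defining scalar quantities of $\XX{\el}$ and of \eqref{dof_el} are coordinate independent up to explicit Jacobian factors. The two Jacobian factors $J_\el$ and $J_\el^2$ built into the definitions of $\PiolaK{\el}$ and $\PiolaKc{\el}$ are chosen precisely so that the pairings in (ii) come out with no leftover constants other than the face-area ratio.

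First I would establish (i). Since $G_\el$ is affine, $\tau=\PiolaK{\el}(\tauref)$ is a polynomial of degree $k$ on $\el$ if and only if $\tauref$ is a polynomial of degree $k$ on $\elref$, so the ``$P^2$'' part of the space is preserved; symmetry of $B_\el\tauref B_\el^\top$ is automatic. For the face constraints I would compute $n\cdot\tau n|_F$ in terms of $\tauref$: writing $n$ on $F=G_\el(\Fref)$ as a multiple of $B_\el^{-\top}\nref$ (the exterior normal transforms contravariantly) and using $J_\el^2\,\tau\circ G_\el=B_\el\tauref B_\el^\top$, the vectors $B_\el^\top$ and $B_\el^{-\top}$ cancel and one gets $n\cdot\tau n|_F = (\text{const})\,(\nref\cdot\tauref\nref)\circ G_\el^{-1}|_F$ with a nonzero constant depending only on $|n|$, $|\nref|$, $J_\el$. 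Hence $n\cdot\tau n|_F\in P^k(F)$ iff $\nref\cdot\tauref\nref|_{\Fref}\in P^k(\Fref)$, which gives $P^{2,k}$-membership and, taking $k=0$, the vanishing-normal-normal condition defining $P^2_0(\el;\SS)$. It remains to check that the constant pieces $P^0_{nn}$ and $P^0_0$ transform into their reference counterparts; this follows because the tensors $T_i$ are built from edge vectors $e_{i,j}$ and normals $n_i$, which map under $B_\el$ and $B_\el^{-\top}$ to the corresponding reference objects (up to scaling), and the normalizing denominator in $T_i$ is exactly the product $n_i\cdot e_{i+1,i+2}\,n_i\cdot e_{i+1,i+3}$ that rescales correctly, so $\PiolaK{\el}$ maps $\gen{T_i}$ onto $\gen{\widehat T_i}$ face by face; alternatively one invokes the splitting $\XX{\el}=\bigl(P^2(\el)P^0_{nn}\cap P^{2,1}\bigr)\oplus P^0_0$ and notes that the $P^{2,1}$ constraint plus the constant-tensor structure is already shown to be preserved. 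Combining, $\tau\in\XX{\el}\Leftrightarrow\tauref\in\XX{\elref}$.

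For (ii) I would treat the three identities in turn. The face pairing: with the constant $n\cdot\tau n = J_\el^{-2}|B_\el^\top n|^2\,\nref\cdot\tauref\nref$ established above (where $\nref = B_\el^\top n/|B_\el^\top n|$ on the relevant face) and the substitution $q\circ G_\el=\widehat q$, the integral $\int_F n\cdot\tau n\,q$ becomes $\int_{\Fref}(\dots)\,\nref\cdot\tauref\nref\,\widehat q$ times the surface-measure Jacobian; multiplying by $|F|$ versus $|\Fref|$ absorbs all remaining factors, since $|F|/|\Fref|$ equals the surface Jacobian and the leftover scalar $J_\el^{-2}|B_\el^\top n|^2$ is exactly reciprocal to it — this is the one bookkeeping point worth doing carefully. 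The volume pairing $\vdual{\tau}{c}_\el$: by the substitution rule $\vdual{\tau}{c}_\el=|J_\el|\int_{\elref}(\tau\circ G_\el):(c\circ G_\el)$, and $(\tau\circ G_\el):(c\circ G_\el) = J_\el^{-2}(B_\el\tauref B_\el^\top):(J_\el B_\el^{-\top}\widehat c B_\el^{-1}) = J_\el^{-1}\tauref:\widehat c$ using the identity $(B M B^\top):(B^{-\top}N B^{-1})=M:N$; the factors $|J_\el|J_\el^{-1}=\pm1$ cancel (and one fixes orientation so $J_\el>0$ or tracks the sign), giving $\vdual{\tau}{c}_\el=\vdual{\tauref}{\widehat c}_\elref$. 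The divergence pairing: here I use the well-known Piola identity $\div\tau\circ G_\el = J_\el^{-1}B_\el\,\divref(J_\el^{-1}B_\el^{-1}\cdot)$—more precisely, since $J_\el^2\tau\circ G_\el=B_\el\tauref B_\el^\top$ row-wise is a Piola transform, one has $(\div\tau)\circ G_\el = J_\el^{-2}B_\el\,\divref\tauref \cdot(\text{something})$; the clean statement is that the map $v\mapsto\div\tau$ intertwines with $\PiolaCurl{\el}$, i.e. $\div\tau=\PiolaCurl{\el}(\divref\tauref)$, so $\vdual{\div\tau}{v}_\el=\vdual{\PiolaCurl{\el}(\divref\tauref)}{\PiolaCurl{\el}(\vref)}_\el$ and then the same $(B,B^{-\top})$-cancellation plus substitution rule as for the $\vdual{\tau}{c}$ case yields $\vdual{\divref\tauref}{\vref}_\elref$.

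The main obstacle is purely the constant-tracking in the divergence identity: one must verify that the particular re-scaling chosen for $\PiolaK{\el}$ (namely $J_\el^2\tau\circ G_\el = B_\el\tauref B_\el^\top$) is compatible with $\PiolaCurl{\el}$ (namely $v\circ G_\el=J_\el B_\el^{-\top}\vref$) in the sense that $\div$ of the former equals $\PiolaCurl{\el}$ of $\divref$ of the reference tensor, with no stray powers of $J_\el$. This is a standard but slightly delicate chain-rule computation on each row of $\tau$; once it is in place, all three pairing identities and part (i) follow from the algebraic identity $(B M B^\top):(B^{-\top} N B^{-1}) = M:N$ and the change-of-variables formula, so the rest is routine.
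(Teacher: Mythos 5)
Your overall strategy (direct change-of-variables verification of the three pairings, then reading off (i) from the transformation of normals, edge vectors and the nn-trace degree) is sound and is essentially the computation the paper outsources to the two-dimensional analogue \cite[Lemma~13]{CarstensenH_NNC}; the paper then obtains (i) as a consequence of (ii), whereas you argue (i) directly via the spans of the $T_i$ — that difference is immaterial. Your face identity is correct as sketched: with $\nref\propto B_\el^\top n$ the scalar factor $J_\el^{-2}|B_\el^\top n|^2$ is indeed cancelled by the two area ratios $|F|/|\Fref|$, by Nanson's formula $|F|=|J_\el|\,|B_\el^{-\top}\nref|\,|\Fref|$, and the $\vdual{\tau}{c}$ identity via $(BMB^\top):(B^{-\top}NB^{-1})=M:N$ is fine.

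There is, however, a concrete error in your divergence step. The ``clean statement'' you invoke, $\div\tau=\PiolaCurl{\el}(\divref\tauref)$, is false for a general affine map: from $J_\el^{2}\,\tau\circ G_\el=B_\el\tauref B_\el^{\top}$ the chain rule gives $(\div\tau)\circ G_\el=J_\el^{-2}B_\el\,\divref\tauref$, whereas $\PiolaCurl{\el}(\divref\tauref)$ would require the factor $J_\el B_\el^{-\top}$; these differ unless $B_\el^{\top}B_\el$ is a suitable multiple of the identity (take $B_\el=\mathrm{diag}(2,1,1)$ to see it fail). Moreover, if you then pair two $\PiolaCurl{\el}$-images as you propose, both arguments carry the factor $J_\el B_\el^{-\top}$, so there is no $(B,B^{-\top})$-cancellation; you would be left with $|J_\el|J_\el^{2}\int_{\elref}\divref\tauref\cdot B_\el^{-1}B_\el^{-\top}\vref$, which is not $\vdual{\divref\tauref}{\vref}_{\elref}$. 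The repair is simple and is the point of the chosen scalings: use the correct rule $(\div\tau)\circ G_\el=J_\el^{-2}B_\el\,\divref\tauref$ and pair it with $v\circ G_\el=J_\el B_\el^{-\top}\vref$; then $\vdual{\div\tau}{v}_{\el}=|J_\el|\,J_\el^{-1}\int_{\elref}(B_\el\divref\tauref)\cdot(B_\el^{-\top}\vref)=\vdual{\divref\tauref}{\vref}_{\elref}$ (with the usual orientation convention $J_\el>0$). In other words, the compatibility of $\PiolaK{\el}$ with $\PiolaCurl{\el}$ holds at the level of the $L_2$-duality, not as a pointwise intertwining of $\div$ with $\PiolaCurl{\el}$; with that correction the rest of your argument goes through.
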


\begin{proof}
A proof of statement (ii) is analogous to that of the two-dimensional case on
triangles, see~\cite[Lemma~13]{CarstensenH_NNC}. Statement (i) is consequence of
relations (ii). In fact, the Piola--Kirchhoff transformation $\PiolaK{\el}$
and its inverse conserve the space of symmetric polynomial tensor functions of a fixed degree.
The conservation of the moments of the normal-normal traces implies that
$\PiolaK{\el}$ is a bijective mapping between $P^0_{nn}(\elref;\SS)$  and $P^0_{nn}(\el;\SS)$,
and between $P^0_0(\elref;\SS)$ and $P^0_0(\el;\SS)$.
It also means that the normal-normal traces of $\tau$ on faces $F\in\cF(\el)$
are linear polynomials if and only if this applies to the normal-normal traces of $\tauref$ on
$\Fref\in\cF(\elref)$. This finishes the proof.
\end{proof}

\begin{prop}[degrees of freedom] \label{prop_dof}
For a tetrahedron $\el\subset\Rt$ relations
$P^0(\el;\SS)\subset \XX{\el}$, $\dim\XX{\el}=30$ hold, and
\eqref{dof_el} are degrees of freedom of $\XX{\el}$ for a finite element in the sense of Ciarlet.
\end{prop}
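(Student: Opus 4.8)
The plan is to prove the three claims in turn: the inclusion $P^0(\el;\SS)\subset\XX{\el}$ and the equality $\dim\XX{\el}=30$ by elementary linear algebra based on Lemma~\ref{la_SS}, and---since \eqref{dof_el} will turn out to comprise exactly $30$ functionals---unisolvence by verifying that any $\tau\in\XX{\el}$ annihilating all functionals in \eqref{dof_el} is zero. For the dimension count, Lemma~\ref{la_SS} makes $\{T_1,\dots,T_4\}$ linearly independent, so pointwise evaluation identifies $P^2(\el)P^0_{nn}(\el;\SS)=\{\sum_{i=1}^4 p_iT_i\,:\,p_i\in P^2(\el)\}$ with $P^2(\el)^4$, of dimension $40$. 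By the Remark, $n_j\cdot(\sum_i p_iT_i)n_j|_{F_j}=p_j|_{F_j}$, so intersecting with $P^{2,1}(\el,\SS)$ forces $p_j|_{F_j}\in P^1(F_j)$ for $j=1,\dots,4$; as the restriction $P^2(\el)\to P^2(F_j)$ is onto, this removes $\dim P^2(F_j)-\dim P^1(F_j)=3$ dimensions from each $p_j$, leaving $\dim\bigl(P^2(\el)P^0_{nn}(\el;\SS)\cap P^{2,1}(\el,\SS)\bigr)=4\cdot7=28$. The sum with $P^0_0(\el;\SS)=\mathrm{span}\{T_5,T_6\}$, which has dimension $2$ by Lemma~\ref{la_SS}, is direct because elements of $P^2(\el)P^0_{nn}(\el;\SS)$ take values in $\mathrm{span}\{T_1,\dots,T_4\}$, which by Lemma~\ref{la_SS} meets $\mathrm{span}\{T_5,T_6\}$ only in $0$; hence $\dim\XX{\el}=28+2=30$. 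Moreover $P^0(\el;\SS)\subset\XX{\el}$, since $T_1,\dots,T_4$ have constant---hence linear---normal-normal traces and thus lie in $P^2(\el)P^0_{nn}(\el;\SS)\cap P^{2,1}(\el,\SS)$, while $T_5,T_6\in P^0_0(\el;\SS)$.

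Since \eqref{dof_el} contains $4\,\dim P^1(F)+\dim P^0(\el;\SS)+\dim P^1(\el;\Rt)=12+6+12=30=\dim\XX{\el}$ functionals, the element property follows once the nullity claim is shown. Let $\tau=\sum_{i=1}^4 p_iT_i+c_5T_5+c_6T_6\in\XX{\el}$ annihilate all of \eqref{dof_el}. Vanishing of \eqref{dof_el_nn} says that the linear polynomial $p_j|_{F_j}=n_j\cdot\tau n_j|_{F_j}$ is $L_2(F_j)$-orthogonal to $P^1(F_j)$, hence $p_j|_{F_j}=0$; thus $p_j$ vanishes on the hyperplane $\{\lambda_j=0\}$ and is divisible by $\lambda_j$, say $p_j=\lambda_j\ell_j$ with $\ell_j\in P^1(\el)$. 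Since $T_5,T_6$ are constant, $\div\tau=\sum_{j=1}^4 T_j\grad(\lambda_j\ell_j)\in P^1(\el;\Rt)$, and vanishing of \eqref{dof_el_div} makes this element of $P^1(\el;\Rt)$ orthogonal to $P^1(\el;\Rt)$ in $L_2(\el)$, i.e.\ $\div\tau=0$. It then remains to deduce, from $\div\tau=0$ and the moment conditions \eqref{dof_el_id} (equivalently $\int_\el\tau\,dx=0$ in $\SS$), that $\ell_1=\dots=\ell_4=0$ and $c_5=c_6=0$, which gives $\tau=0$.

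I expect this concluding step to be the main obstacle, and the only part that is not soft. It is a homogeneous linear system for the $18$ scalar unknowns in $(\ell_j)_{j=1}^4\in P^1(\el)^4$ and $c_5,c_6$, subject to the $12$ scalar equations carried by the affine identity $\div\tau=0$ and the $6$ equations $\int_\el\tau\,dx=0$, and its unique solvability rests on the algebraic design of $T_1,\dots,T_6$---the relations $T_in_{i+1}=0$, $n_j\cdot T_in_j=\delta_{ij}$ for $i,j\le4$, and $n_j\cdot T_in_j=0$ for $i\in\{5,6\}$ from the Remark and Lemma~\ref{la_SS}---which is what makes the number of conditions match the number of unknowns. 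I would carry it out on the reference tetrahedron $\elref$: by Lemma~\ref{la_trafo} the re-scaled Piola--Kirchhoff map sends $\XX{\el}$ onto $\XX{\elref}$, the divergence onto the divergence, and the functionals \eqref{dof_el} onto their reference counterparts, so unisolvence on $\el$ is equivalent to unisolvence on $\elref$, where $\lambda_i$, $\grad\lambda_i$ and $T_i$ are explicit and the system is solved directly. As a cross-check one may instead integrate $0=\vdual{\div\tau}{v}_\el$ by parts for $v\in P^1(\el;\Rt)$: since $\strain{v}$ exhausts $P^0(\el;\SS)$, the volume term drops by \eqref{dof_el_id}, leaving $\int_{\partial\el}(\tau n)\cdot v\,ds=0$ for all $v\in P^1(\el;\Rt)$, which together with $\div\tau=0$ and the already-derived $n_k\cdot\tau n_k|_{F_k}=0$ can be examined face by face---but this, too, terminates in the same elementary verification.
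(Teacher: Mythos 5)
Your treatment of the inclusion $P^0(\el;\SS)\subset\XX{\el}$ and of the dimension count ($40-4\cdot 3+2=30$) is correct and coincides with the paper's argument, and your reduction of unisolvence is also the paper's: the vanishing of \eqref{dof_el_nn} forces $p_j|_{F_j}=0$, hence $p_j=\lambda_j\ell_j$ with $\ell_j\in P^1(\el)$; \eqref{dof_el_id} (i.e.\ $\int_\el\tau\,dx=0$ together with the linear independence of $T_1,\dots,T_6$ from Lemma~\ref{la_SS}) kills $c_5,c_6$ and yields the four moment conditions $\vdual{\lambda_j\ell_j}{1}_\el=0$; and \eqref{dof_el_div} gives $\div\tau=0$ since $\div\tau\in P^1(\el;\Rt)$.

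The genuine gap is that you stop exactly at the step that carries all the substance: showing that the resulting homogeneous system (your $18$ unknowns with $12+6$ equations, equivalently the paper's $12\times 12$ system after eliminating $c_5,c_6$ and the diagonal values $\phi_i(x_i)$ via the moment conditions) has only the trivial solution. You assert that ``the number of conditions matches the number of unknowns'' and that the relations $T_in_{i+1}=0$, $n_j\cdot T_in_j=\delta_{ij}$ make it work, but a square homogeneous system can perfectly well be singular, and nothing in the soft structure rules this out; this is precisely where the design of the element could fail and where a proof is required. The paper settles it by transforming to the reference tetrahedron (via Lemma~\ref{la_trafo}, as you also propose), writing the system in the unknowns $\phi_i(x_j)$ with coefficients $t_{ij}=T_i\wtilde n_j$, using the identity $\vdual{\lambda_i}{\lambda_j}_\el/\vdual{\lambda_i}{\lambda_{i+1}}_\el=\delta_{ij}+1$ to eliminate $\phi_i(x_i)$, and computing the determinant of the explicit reduced $12\times 12$ matrix, which equals $2^{-8}3^4 5^2\neq 0$. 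Until you (or a symbolic computation you exhibit) carry out that verification, the unisolvence claim, and hence the Ciarlet finite element property, is not proved.
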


\begin{proof}
The first relation follows from of Lemma~\ref{la_SS} and
the fact that $P^0_{nn}(\el;\SS)$ and $P^0_0(\el;\SS)$ are subsets of $\XX{\el}$ by construction.

{\bf Dimension.}
By definition of $P^0_{nn}(\el;\SS)$,
\[
   P^2(\el) P^0_{nn}(\el;\SS) = T_1P^2(\el)\oplus \cdots \oplus T_4P^2(\el)
\]
has dimension $40$.
Since $n_j\cdot(T_iP^2(\el)) n_j=\delta_{ij}P^2(\el)$ for $i,j=1,\ldots,4$,
the constraint
$\bigl(T_iP^2(\el)\bigr)\cap P^{2,1}(\el,\SS)$ requires to select the subspaces
\begin{align*}
   P^{2,1}_i(\el)
   &:=\{v\in P^2(\el);\; v|_{F_i}\in P^1(F_i)\} 
   = P^1(\el)\oplus
   \mathrm{span}\{\lambda_i\lambda_{i+1},\lambda_i\lambda_{i+2},\lambda_i\lambda_{i+3}\}
\end{align*}
for $i=1,\ldots,4$, each of dimension $7$. Summing the dimensions gives $4$ times $7$
plus the dimension $2$ of $P^0_0(\el;\SS)$ in \eqref{element}.

{\bf Degrees of freedom.}
By Lemma~\ref{la_trafo}, \eqref{dof_el} are degrees of freedom of $\XX{\el}$
for a general tetrahedron $\el\in\mesh$ if and only if they are degrees of freedom
for the reference tetrahedron $\el=\elref$.
In the remainder of this proof we therefore assume without loss of generality
that $\el=\elref$. We need this assumption only in the final step
to verify the invertibility of a $12\times 12$-matrix.
The number of functionals in \eqref{dof_el} is
$30$: 12 in \eqref{dof_el_nn}, 6 in \eqref{dof_el_id}, and 12 in \eqref{dof_el_div}.
It is therefore enough to show the linear independence of \eqref{dof_el} on $\XX{\el}$.

{\bf Linear independence.}
Let $\tau\in\XX{\el}$ be given with vanishing functionals \eqref{dof_el}.
We have to show that $\tau=0$.
The vanishing of \eqref{dof_el_nn} leads to
\[
   \tau\in \oplus_{i=1}^4 \bigr(T_iP^2(\el)\cap P^2_0(\el,\SS)\bigr)\oplus P^0_0(\el;\SS).
\]
Since
\begin{align*}
   &T_iP^2(\el)\cap P^2_0(\el,\SS)
   = T_i\,\mathrm{span}\{\phi\in P^2(\el);\; \phi|_{F_i}=0\}
   = T_i\,\mathrm{span}\{\lambda_i, \lambda_i\lambda_{i+1}, \lambda_i\lambda_{i+2},
                       \lambda_i\lambda_{i+3}\}
\end{align*}
for $i=1,\ldots,4$ we deduce the representation
\begin{align*}
   \tau=\sum_{i=1}^4 \lambda_i\phi_i T_i+\phi_5T_5+\phi_6T_6\quad\text{with}\quad
   \phi_i\in P^1(\el) \text{ for } i=1,\ldots,4,\ \text{ and } \phi_5,\phi_6\in\R.
\end{align*}
The vanishing of degrees of freedom \eqref{dof_el_id} means
\[
   \sum_{i=1}^4 \vdual{\lambda_i\phi_i}{1}_\el T_i
             +\vdual{\phi_5}{1}_\el T_5+\vdual{\phi_6}{1}_\el T_6=0,
\]
so that, by the linear independence of $T_1,\ldots,T_6$,
\begin{align*}
  \vdual{\lambda_i\phi_i}{1}_\el=0\quad \text{for } i=1,\ldots,4,\quad
  \text{and}\quad \phi_5=\phi_6=0.
\end{align*}
We conclude that
\begin{align} \label{tau1}
   \tau=\sum_{i=1}^4 \lambda_i\phi_i T_i
   \quad\text{and}\quad \vdual{\lambda_j\phi_j}{1}_\el=0\quad \text{for}\ j=1,\ldots,4.
\end{align}
In the following we represent $\phi_i=\sum_{j=1}^4 \phi_i(x_j) \lambda_j$ and
calculate
\(
   \grad\phi_i=\sum_{j=1}^4 \phi_i(x_j)\wtilde n_j
\)
for $i=1,\ldots,4$ (recall that $\wtilde n_j=\grad\lambda_j\in\mathrm{span}(n_j)$).
Representation \eqref{tau1} and the vanishing of \eqref{dof_el_div} reveal that
\begin{align} \label{sys}
   0=\div\tau(x_k) &= \sum_{i=1}^4 T_i\bigl(\phi_i\grad\lambda_i + \lambda_i\grad\phi_i\bigr)(x_k)
   =
   \sum_{i=1}^4 T_i\bigl(\phi_i\wtilde n_i
                       + \lambda_i\sum_{j=1}^4 \phi_i(x_j)\wtilde n_j\bigr)(x_k)\nonumber\\
   &=
   \sum_{i=1}^4 T_i\bigl(\phi_i(x_k)\wtilde n_i
                       + \delta_{ik}\sum_{j=1}^4 \phi_i(x_j)\wtilde n_j\bigr)
   \quad\text{for}\ k=1,\ldots,4.
\end{align}
These equations form a homogeneous linear system of $12$ equations for the $16$ unknowns
$\phi_i(x_j)$, $i,j=1,\ldots,4$. Instead of unknowns $\phi_i(x_i)$ we use the scaled unknowns
$2\phi_i(x_i)$ for $i=1,\ldots,4$ and order the equations with respect to $k$ and
the unknowns with inner loop over $j$ and outer loop over $i$, namely
\[
   2\phi_1(x_1), \phi_1(x_2), \phi_1(x_3), \phi_1(x_4),\quad
   \phi_2(x_1), 2\phi_2(x_2), \phi_2(x_3), \phi_2(x_4),\quad\ldots,
   \phi_4(x_3), 2\phi_4(x_4).
\]
Denoting $t_{ij}:=T_i\wtilde n_j$, the corresponding matrix of system \eqref{sys} becomes
\[
   \begin{pmatrix}
   \begin{array}{cccc|cccc|cccc|cccc}
      t_{11} & t_{12} & t_{13} & t_{14} & t_{22} & 0 & 0 & 0 & t_{33} & 0 & 0 & 0 & t_{44} & 0 & 0 & 0 \\\hline
      0 & t_{11} & 0 & 0 & t_{21} & t_{22} & t_{23} & t_{24} & 0 & t_{33} & 0 & 0 & 0 & t_{44} & 0 & 0 \\\hline
      0 & 0 & t_{11} & 0 & 0 & 0 & t_{22} & 0 & t_{31} & t_{32} & t_{33} & t_{34} & 0 & 0 & t_{44} & 0 \\\hline
      0 & 0 & 0 & t_{11} & 0 & 0 & 0 & t_{22} & 0 & 0 & 0 & t_{33} & t_{41} & t_{42} & t_{43} & t_{44}
   \end{array}
   \end{pmatrix}.
\]
It remains to incorporate the relations
$\vdual{\lambda_i\phi_i}{1}_\el=0$ for $i=1,\ldots,4$ from \eqref{tau1}.
We note that
$\vdual{\lambda_i}{\lambda_j}_\el/\vdual{\lambda_i}{\lambda_{i+1}}_\el=\delta_{ij}+1$,
see, e.g., \cite[Theorem~3.1]{VermolenS_18_IRP},
and recall the representation $\phi_i=\sum_{j=1}^4 \phi_i(x_j)\lambda_j$.
We find that
\(
   \vdual{\lambda_i\phi_i}{1}_\el = \sum_{j=1}^4 \phi_i(x_j) \vdual{\lambda_i}{\lambda_j}_\el=0
\)
holds if and only if
\[
   \frac 1{\vdual{\lambda_i}{\lambda_{i+1}}_\el}
   \sum_{j=1}^4 \phi_i(x_j) \vdual{\lambda_i}{\lambda_j}_\el
   =
   2\phi_i(x_i) + \phi_i(x_{i+1}) + \phi_i(x_{i+2}) + \phi_i(x_{i+3}) = 0,
   \quad i=1,\ldots,4.
\]
We use this relation to eliminate the unknowns $2\phi_i(x_i)$ for $i=1,\ldots,4$.
With the notation
$\wtilde t_{ij}:=t_{ij}-t_{ii}$ the reduced $12\times 12$ matrix reads
\def\mm{\!\!-\!\!}
\newcommand\Tstrut{\rule{0pt}{2.6ex}} 
\begin{equation} \label{mat_red}
\begin{pmatrix}
\begin{array}{ccc|ccc|ccc|ccc}
  \wtt_{12}& \wtt_{13}& \wtt_{14}& t_{22} & 0 & 0 & t_{33} & 0 & 0 & t_{44} & 0 & 0 \\\hline
\Tstrut
  t_{11} & 0 & 0 & \wtt_{21}& \wtt_{23}& \wtt_{24}& 0 & t_{33} & 0 & 0 & t_{44} & 0 \\\hline
\Tstrut
  0 & t_{11} & 0 & 0 & t_{22} & 0 & \wtt_{31}& \wtt_{32}& \wtt_{34}& 0 & 0 & t_{44} \\\hline
\Tstrut
  0 & 0 & t_{11} & 0 & 0 & t_{22} & 0 & 0 & t_{33} & \wtt_{41}& \wtt_{42}& \wtt_{43}
\end{array}
\end{pmatrix}.
\end{equation}
The proof of the lemma is finished by noting that this matrix is invertible.
Let us give the details.

Recall that we are considering the reference tetrahedron $\el=\elref$
with the previously introduced numbering of vertices.
In particular,
\begin{align*}
   &\lambda_1(x,y,z)=1-x-y-z,\ \lambda_2(x,y,z)=x,\ \lambda_3(x,y,z)=y,\ \lambda_4=z,\\
   &\wtilde n_1=-(1,1,1)^\top,\ \wtilde n_2=(1,0,0)^\top,\ \wtilde n_3=(0,1,0)^\top,\ \wtilde n_4=(0,0,1)^\top,
\end{align*}
and selecting the edge vectors
\begin{align*}
   &e_{1,2}=(0,1,-1)^\top,\ e_{1,3}=(1,0,-1)^\top,\ e_{1,4}=(1,-1,0)^\top,\ e_{2,3}=(0,0,1)^\top,\\
   &e_{2,4}=(0,1,0)^\top,\ e_{3,4}=(1,0,0)^\top,\ e_{i,j}=e_{j,i}\ (i>j)
\end{align*} 
we calculate
\begin{align*}
   &T_1=\frac{e_{2,3}\odot e_{2,4}}{n_1\cdot e_{2,3}\; n_1\cdot e_{2,4}}
   = \begin{pmatrix} 0 & 0 & 0\\ 0 & 0 & 3/2\\ 0 & 3/2 & 0 \end{pmatrix},
   &&T_2=\frac{e_{3,4}\odot e_{3,1}}{n_2\cdot e_{3,4}\; n_2\cdot e_{3,4}}
   = \begin{pmatrix} 1 & 0 & -1/2\\ 0 & 0 & 0\\ -1/2 & 0 & 0 \end{pmatrix},\\
   &T_3=\frac{e_{4,1}\odot e_{4,2}}{n_3\cdot e_{4,1}\; n_3\cdot e_{4,2}}
   = \begin{pmatrix} 0 & -1/2 & 0\\ -1/2 & 1 & 0\\ 0 & 0 & 0 \end{pmatrix},
   &&T_4=\frac{e_{1,2}\odot e_{1,3}}{n_4\cdot e_{1,2}\; n_4\cdot e_{1,3}}
   = \begin{pmatrix} 0 & 1/2 & -1/2\\ 1/2 & 0 & -1/2\\ -1/2 & -1/2 & 1 \end{pmatrix}.
\end{align*}
This gives (note that $t_{i,i+1}=(0,0,0)^\top$)
\newcommand{\mvec}[3]{\begin{pmatrix}#1\\#2\\#3\end{pmatrix}}
\begin{align*}
   &t_{11}=\mvec{0}{-3/2}{-3/2},\ t_{13}=\mvec{0}{0}{3/2},\ t_{14}=\mvec{0}{3/2}{0},\
   &&\wtt_{12}=-t_{11},\ \wtt_{13}=\mvec{0}{3/2}{3},\ \wtt_{14}=\mvec{0}{3}{3/2},\\
   &t_{22}=\mvec{1}{0}{-1/2},\ t_{21}=\mvec{-1/2}{0}{1/2},\ t_{24}=\mvec{-1/2}{0}{0},\
   &&\wtt_{21}=\mvec{-3/2}{0}{1},\ \wtt_{23}=-t_{22},\ \wtt_{24}=\mvec{-3/2}{0}{1/2},\\
   &t_{33}=\mvec{-1/2}{1}{0},\ t_{31}=\mvec{1/2}{-1/2}{0},\ t_{32}=\mvec{0}{-1/2}{0},\
   &&\wtt_{31}=\mvec{1}{-3/2}{0},\ \wtt_{32}=\mvec{1/2}{-3/2}{0},\ \wtt_{34}=-t_{33},\\
   &t_{44}=\mvec{-1/2}{-1/2}{1},\ t_{42}=\mvec{0}{1/2}{-1/2},\ t_{43}=\mvec{1/2}{0}{-1/2},\
   &&\wtt_{41}=-t_{44},\ \wtt_{42}=\mvec{1/2}{1}{-3/2},\ \wtt_{43}=\mvec{1}{1/2}{-3/2},
\end{align*}
and matrix \eqref{mat_red} reads
\[
\begin{pmatrix}
\begin{array}{ccc|ccc|ccc|ccc}
%
%
%
  0  & 0  & 0  &     1   & 0  & 0    &     -1/2 & 0 & 0        & -1/2 & 0 & 0 \\
  3/2& 3/2& 3  &     0   & 0  & 0    &      1   & 0 & 0        & -1/2 & 0 & 0 \\
  3/2& 3  & 3/2&     -1/2& 0  & 0    &      0   & 0 & 0        & 1    & 0 & 0 \\\hline

  0 & 0 & 0    &     -3/2& -1 & -3/2 &      0   & -1/2 & 0     & 0 & -1/2 & 0 \\
  -3/2 & 0& 0  &     0 &  0 & 0      &      0   & 1 & 0        & 0 & -1/2 & 0 \\
  -3/2 & 0& 0  &     1 & 1/2& 1/2    &      0   & 0 & 0        & 0 & 1 & 0    \\\hline

  0 & 0  & 0   &     0 & 1 & 0       &       1  & 1/2 & 1/2    & 0 & 0 & -1/2 \\
  0 & -3/2 & 0 &     0 & 0 & 0       &     -3/2 & -3/2 & -1    & 0 & 0 & -1/2 \\
  0 & -3/2 & 0 &     0 & -1/2 & 0    &       0  & 0 & 0        & 0 & 0 & 1    \\\hline

  0 & 0 & 0    &     0 & 0 & 1       &       0  & 0 & -1/2     & 1/2& 1/2& 1  \\
  0 & 0 & -3/2 &     0 & 0 & 0       &       0  & 0 & 1        & 1/2& 1& 1/2  \\
  0 & 0 & -3/2 &     0 & 0 & -1/2    &       0  & 0 & 0        & -1& -3/2& -3/2
\end{array}
\end{pmatrix}.
\]
A lengthy calculation shows that its determinant is $2^{-8} 3^4 5^2$.
This finishes the proof.
\end{proof}

\subsection{Approximation space} \label{sec_approx}

Elements $\XX{\el}$ ($\el\in\mesh$) from \eqref{element} generate the product
space $\XX{\mesh}$ that gives rise to
the $H_{nn}(\div,\mesh;\SS)$-conforming finite element space
\[
   P^2_{nn}(\mesh) := \XX{\mesh}\cap H_{nn}(\div,\mesh;\SS)
   = \{\tau\in\XX{\mesh};\; [n\cdot\tau n]|_F=0\ \forall F\in\cF(\Omega)\}.
\]
Here, $[n\cdot\tau n]|_F$ denotes the normal-normal jump of $\tau$ across $F$.
Equivalently, this conformity is achieved by the selection of unique degrees
of freedom \eqref{dof_el_nn} assigned to interior faces $F\in\cF(\Omega)$.
By also assigning volume-related degrees of freedom \eqref{dof_el_id}, \eqref{dof_el_div}
we obtain an interpolation operator
\[
   \Inn{\mesh}:\; H(\div,\Omega;\SS)\cap L_q(\Omega;\SS) \to P^2_{nn}(\mesh)
   \quad (q>2).
\]
In the following, $\div_\mesh$ is the piecewise divergence operator and
$\Pi^1_\mesh$ denotes the $L_2$-projection onto piecewise linear vector-valued polynomials.

\begin{prop}[interpolation] \label{prop_Inn}
Operator $\Inn{\mesh}:\;H(\div,\Omega;\SS)\cap L_q(\Omega;\SS)\to P^2_{nn}(\mesh)$ is
well defined and bounded for $q>2$.
It commutes with the piecewise divergence operator,
\(
   \div_\mesh\Inn{\mesh}=\Pi^1_\mesh\div,
\)
and has the projection property
$\Inn{\mesh}\tau=\tau$ for $\tau\in P^2_{nn}(\mesh)$.
Given $0<s\le 1$, $0\le r\le 2$, any $\tau\in H^{s,r}(\div,\Omega;\SS)$ satisfies
\begin{alignat*}{2}
   \|\tau-\Inn{\mesh}\tau\|
   &\lesssim \|h_\mesh\|_{\infty}^s \|\tau\|_s +  \|h_\mesh\|_{\infty} \|\div\tau\|\quad
   &&\forall \tau\in H^{s,0}(\div,\Omega;\SS),\\
   \|\div(\tau-\Inn{\mesh}\tau)\|_\mesh
   &\lesssim \|h_\mesh\|_\infty^r \|\div\tau\|_r
   \quad&&\forall \tau\in H^{s,r}(\div,\Omega;\SS)
\end{alignat*}
with intrinsic constants independent of $\tau$ and $\mesh$.
\end{prop}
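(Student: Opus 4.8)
The plan is to argue by the standard theory of nodal finite-element interpolation: build $\Inn{\mesh}$ element by element from the functionals \eqref{dof_el}, using the unisolvence statement Proposition~\ref{prop_dof}, and then patch the local interpolants. The purpose of the integrability assumption $\tau\in L_q(\Omega;\SS)$, $q>2$, is to give meaning to the face functionals \eqref{dof_el_nn}: for $\tau\in H(\div,\el;\SS)\cap L_q(\el;\SS)$ the normal-normal trace $n\cdot\tau n|_F$ is a well-defined integrable function on each $F\in\cF(\el)$ (the three-dimensional analogue of the trace argument used in \cite{CarstensenH_NNC}), whereas the volume functionals \eqref{dof_el_id}, \eqref{dof_el_div} are controlled directly by $\|\tau\|_{L_q(\el)}$ and $\|\div\tau\|_\el$. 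Hence the local interpolant $\Inn{\el}\tau\in\XX{\el}$ is well defined, and since $\tau\in H(\div,\Omega;\SS)$ has a single-valued normal-normal trace across every interior face, the functionals \eqref{dof_el_nn} on a shared face $F\in\cF(\Omega)$ agree from both sides; thus the piecewise interpolant has vanishing normal-normal jump and $\Inn{\mesh}\tau\in\XX{\mesh}\cap H_{nn}(\div,\mesh;\SS)=P^2_{nn}(\mesh)$. The projection property $\Inn{\mesh}\tau=\tau$ for $\tau\in P^2_{nn}(\mesh)$ is immediate from unisolvence.

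For the commuting identity I would note that $\XX{\el}\subset P^2(\el;\SS)$ implies $\div(\Inn{\el}\tau)\in P^1(\el;\Rt)$, and then test against $v\in P^1(\el;\Rt)$: since $\Inn{\el}\tau$ shares the functionals \eqref{dof_el_div} with $\tau$, we get $\vdual{\div(\Inn{\el}\tau)}{v}_\el=\vdual{\div\tau}{v}_\el$ for all such $v$, so $\div(\Inn{\el}\tau)$ is the $L_2(\el)$-projection of $\div\tau$ onto $P^1(\el;\Rt)$, i.e.\ $\div(\Inn{\el}\tau)=(\Pi^1_\mesh\div\tau)|_\el$; patching over $\mesh$ yields $\div_\mesh\Inn{\mesh}=\Pi^1_\mesh\div$. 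For boundedness I would scale to the reference element: the transformations $\PiolaK{\el}$ map $\XX{\el}$ onto $\XX{\elref}$ and transform the functionals \eqref{dof_el} with the explicit factors recorded in Lemma~\ref{la_trafo}, and on the fixed element $\elref$ the interpolation onto the finite-dimensional space $P^2_{nn}(\elref)$ is bounded by the trace and volume functionals; transforming back gives a local stability bound of the form $\|\Inn{\el}\rho\|_\el\lesssim h_\el^{3(1/2-1/q)}\|\rho\|_{L_q(\el)}+h_\el\|\div\rho\|_\el$ with mesh-independent constant ($h_\el:=\diam\el$), which together with $\|\div\Inn{\el}\rho\|_\el=\|(\Pi^1_\mesh\div\rho)|_\el\|_\el\le\|\div\rho\|_\el$ proves boundedness after summation over $\mesh$.

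It remains to derive the two error estimates by the usual Bramble--Hilbert argument. Since $P^0(\el;\SS)\subset\XX{\el}$ by Proposition~\ref{prop_dof}, $\Inn{\el}$ fixes constant tensors; writing $\tau-\Inn{\el}\tau=(\tau-c)-\Inn{\el}(\tau-c)$ with $c$ the mean value of $\tau$ on $\el$, applying the local stability bound to $\Inn{\el}(\tau-c)$, fixing $q>2$ close enough to $2$ that the scaled embedding $H^s(\el)\hookrightarrow L_q(\el)$ holds for the given $s\in(0,1]$, and invoking the fractional Poincar\'e estimate $\inf_c\|\tau-c\|_{L_2(\el)}\lesssim h_\el^s|\tau|_{H^s(\el)}$ on the shape-regular element yields $\|\tau-\Inn{\el}\tau\|_\el\lesssim h_\el^s|\tau|_{H^s(\el)}+h_\el\|\div\tau\|_\el$; summing over $\el\in\mesh$, with $s\le1$ so that $\sum_\el|\tau|_{H^s(\el)}^2\le|\tau|_{H^s(\Omega)}^2$, gives the first inequality. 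The second follows from the commuting identity, $\div(\tau-\Inn{\el}\tau)=\div\tau-(\Pi^1_\mesh\div\tau)|_\el$, and the standard $L_2$-projection error estimate $\|w-\Pi^1_\mesh w\|_\el\lesssim h_\el^r|w|_{H^r(\el)}$ for $0\le r\le2$ (using that $\Pi^1_\mesh$ reproduces $P^1$), applied with $w=\div\tau$ and summed over $\mesh$. The main technical obstacle is the first step, namely making the face functionals \eqref{dof_el_nn} rigorous on the class $H(\div)\cap L_q$ and tracking the fractional powers of $h_\el$ through the $L_q$-detour; as this mirrors the two-dimensional treatment, I would keep these arguments brief and refer to \cite{CarstensenH_NNC}.
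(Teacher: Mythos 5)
Your overall architecture (local interpolant defined through the functionals \eqref{dof_el} and unisolvence from Proposition~\ref{prop_dof}, conformity from single-valued normal-normal data on interior faces, the commuting identity from \eqref{dof_el_div}, reference-element scaling via the transformations of Lemma~\ref{la_trafo}, and a Bramble--Hilbert argument exploiting $P^0(\el;\SS)\subset\XX{\el}$, plus the $L_2$-projection estimate for the divergence) is exactly the standard route, and it is the route the paper follows by citing the two-dimensional proof \cite[Proposition~17]{CarstensenH_NNC}. The one point the paper's proof does spell out --- because it is the genuinely new three-dimensional ingredient --- is precisely the point you misstate and then defer. For $\tau\in H(\div,\el;\SS)\cap L_q(\el;\SS)$ the normal-normal trace $n\cdot\tau n|_F$ is \emph{not} a well-defined integrable function on $F$: the normal trace $\tau n|_{\partial\el}$ exists only as a functional on $W^{1-1/q',q'}(\partial\el;\Rt)$ with $1/q+1/q'=1$. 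The face functionals \eqref{dof_el_nn} therefore have to be interpreted by duality, pairing $\tau n|_{\partial\el}$ with the extension by zero to $\partial\el$ of $p\,n$ for $p\in P^1(F)$, and the entire role of the hypothesis $q>2$ is that this extension by zero is continuous from $W^{1-1/q',q'}(F)$ into $W^{1-1/q',q'}(\partial\el)$, which holds exactly because $1-1/q'<1/q'$, i.e.\ $q'<2$; this is the argument of \cite[Lemma~4.7]{AmroucheBDG_98_VPT} combined with \cite[Corollary~1.4.4.5]{Grisvard_85_EPN}. The extension across the edges of a two-dimensional face is not contained in the two-dimensional treatment you cite (there the faces are one-dimensional edges), so ``keep brief and refer to \cite{CarstensenH_NNC}'' leaves the only new step of the proposition unproven. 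The same duality reading is also what makes two of your other steps rigorous: the agreement of the face functionals from both sides of an interior face for $\tau\in H(\div,\Omega;\SS)\cap L_q(\Omega;\SS)$, and the boundedness of the face functionals that enters your reference-element stability estimate $\|\Inn{\el}\rho\|_\el\lesssim h_\el^{3(1/2-1/q)}\|\rho\|_{L_q(\el)}+h_\el\|\div\rho\|_\el$.

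Once that point is repaired, the rest of your argument goes through as written: the commuting property, the projection property, the Bramble--Hilbert step with constant tensors, and the divergence estimate via $\Pi^1_\mesh$ are all correct and coincide with the cited two-dimensional proof. One cosmetic remark: rather than choosing $q>2$ ``close enough to $2$'' depending on $s$, the paper simply fixes $q=6/(3-2s)>2$, using the three-dimensional Sobolev embedding $H^s(\Omega;\SS)\subset L_q(\Omega;\SS)$ for $0<s\le 1$, which serves the same purpose and makes the dependence on $s$ explicit.
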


\begin{proof}
The proof is almost verbatim to the proof of the two-dimensional case
\cite[Proposition~17]{CarstensenH_NNC}.
We only have to verify that the face degrees of freedom are bounded functionals
for $\tau\in L_q(\Omega;\SS)$ if $q>2$, and
to recall that $H^s(\Omega;\SS)\subset L_q(\Omega;\SS)$ for $0<s\le 1$ and $q=6/(3-2s)>2$
by the Sobolev embedding theorem~\cite[Theorem~7.57]{Adams}.
The former fact can be proved analogously to \cite[Lemma~4.7]{AmroucheBDG_98_VPT}.
Essential step in the proof of that lemma is the continuity
of the extension by zero from the Sobolev space $W^{1-1/q',q'}(E)$ on an edge $E$ of
a face $F$ to $W^{1-1/q',q'}(\partial F)$ where $1/q'+1/q=1$.
This zero extension is also continuous as a mapping from $W^{1-1/q',q'}(F)$
to $W^{1-1/q',q'}(\partial\el)$ where $\el\in\mesh$ and $F\in\cF(\el)$,
cf.~\cite[Corollary~1.4.4.5]{Grisvard_85_EPN}, and thus gives the result.
\end{proof}

The next result can be shown analogously to \cite[Lemma~20]{CarstensenH_NNC}.

\begin{lemma} \label{la_trdevdiv2}
Any $\tau\in P^2_{nn}(\mesh)$ with
\[
   \div_\mesh\tau=0,\quad
   \dual{\trnn(w)}{\tau}_\cS=0\ \forall w\in P^1(\mesh;\Rt)\cap H^1_0(\Omega;\Rt),
   \quad\text{and}\quad
   \vdual{\tr(\tau)}{1}=0
\]
satisfies
\[
   \|\tr(\tau)\| \le C \|\dev\tau\|
\]
with a constant $C>0$ that is independent of $\tau$ and $\mesh$.
\end{lemma}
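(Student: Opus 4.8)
The plan is to follow the inf-sup / trace-duality structure of the analogous two-dimensional result \cite[Lemma~20]{CarstensenH_NNC}, but to supply the discrete counterpart of Proposition~\ref{prop_conf}(i) for the finite element space $P^2_{nn}(\mesh)$. The first step is to observe that the hypotheses on $\tau$ say exactly that $\tau$ lies in the discrete kernel $K_0^h$ of the discrete mixed form, i.e.\ $\div_\mesh\tau=0$ together with the vanishing of the trace pairing against the discrete tangential trace space $\trnn(P^1(\mesh;\Rt)\cap H^1_0(\Omega;\Rt))$; by the commuting-diagram property $\div_\mesh\Inn{\mesh}=\Pi^1_\mesh\div$ and the definition of the degrees of freedom \eqref{dof_el_div}, the vanishing of the piecewise divergence against $P^1(\mesh;\Rt)$ is automatic, so the relevant constraint is the trace one restricted to $H^1_0$. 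I would then argue, exactly as for $K_0$ in the proof of Theorem~\ref{thm_VF}, that this discrete kernel is in fact contained in the continuous space $H(\div,\Omega;\SS)$: a piecewise-$H(\div)$ symmetric tensor that is normal-normal continuous and annihilates all discrete tangential traces must, because the degrees of freedom \eqref{dof_el_nn} already force normal-normal continuity and the remaining jump is a tangential-trace functional testable by piecewise-linear displacements, have no interelement jump in the full normal trace; hence $\tau\in H(\div,\Omega;\SS)$ with $\div\tau=0$ and $\vdual{\tr(\tau)}{1}=0$.

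Once $\tau\in K_0=\{\tau\in H(\div,\Omega;\SS);\ \div\tau=0,\ \vdual{\tr(\tau)}{1}=0\}$, the estimate is immediate from the continuous result used in Theorem~\ref{thm_VF}: by \cite[Theorem~1]{CarstensenH_FOT} one has $\|\tr(\tau)\|\le C(\|\dev\tau\|+\|\div\tau\|_{-1})=C\|\dev\tau\|$ since $\div\tau=0$. Thus the whole content of the lemma is the reduction from the \emph{discrete} kernel conditions to membership in $H(\div,\Omega;\SS)$; after that, no new constant is introduced and the bound is inherited with a mesh-independent $C$.

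The step I expect to be the main obstacle is precisely this reduction, i.e.\ showing that the discrete trace-orthogonality against $P^1(\mesh;\Rt)\cap H^1_0(\Omega;\Rt)$, \emph{together with} the built-in normal-normal continuity of $P^2_{nn}(\mesh)$, already implies global $H(\div)$-conformity. For a general $\tau\in H_{nn}(\div,\mesh;\SS)$ one needs \emph{all} of $H^{1/2}_{0,t}(\cS;\Rt)$ to test away the tangential part of the normal trace (Proposition~\ref{prop_conf}(i)); here we only get piecewise-linear test functions, so one must exploit that $\tau|_\el\in\XX{\el}$ has linear normal-normal face traces and that its full normal trace $\tau n|_F$ lies in a low-order polynomial space on each face, so that pairing with piecewise-linear tangential displacements suffices to detect any jump. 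Concretely I would, on each interior face $F$, decompose the jump $[\tau n]|_F$ into its normal-normal part (zero by conformity) and its tangential part, note that the latter is a fixed low-degree polynomial vector field on $F$ determined by the element spaces, and test it against a suitable $w\in P^1(\mesh;\Rt)\cap H^1_0$ supported near $F$ to conclude it vanishes; this is the discrete analogue of the argument behind \cite[Lemma~20]{CarstensenH_NNC} and may require a short computation of the face-trace polynomial degrees of $\XX{\el}$, but is otherwise routine. With $\tau\in H(\div,\Omega;\SS)$ in hand, the proof closes by the continuous estimate as above.
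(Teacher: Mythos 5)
Your reduction of the discrete kernel to the continuous one is the step that fails, and it is not a repairable technicality. The hypothesis only gives $\dual{\trnn(w)}{\tau}_\cS=0$ for $w\in S^1_0(\mesh;\Rt)$, i.e.\ (after elementwise integration by parts, using $\div_\mesh\tau=0$) that $\sum_{F\in\cF(\Omega)}\dual{w}{[\tau n]}_F=0$ for all globally continuous piecewise \emph{linear} $w$ vanishing on $\Gamma$. For $\tau\in P^2_{nn}(\mesh)$ the normal-normal jump vanishes, but the remaining tangential traction jump $[\tau n]|_F$ is in general a (tangential) \emph{quadratic} vector polynomial on each interior face, a $12$-dimensional space per face, while the available tests supply only about three constraints per interior vertex and are moreover coupled across all faces of each vertex patch; your suggested localized test ``supported near $F$'' does not even exist, since a continuous piecewise linear field supported on the two elements sharing $F$ must vanish at all five vertices of that patch and hence is identically zero. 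A simple dimension count over the mesh shows the discrete kernel is far larger than its intersection with $H(\div,\Omega;\SS)$; indeed, if the discrete kernel were contained in $H(\div,\Omega;\SS)$ with $\div\tau=0$, the discrete stability analysis would collapse to the continuous one, which is precisely not the situation for this normal-normal continuous (non-$H(\div)$-conforming) method. So you cannot conclude $\tau\in K_0$ and then invoke \cite[Theorem~1]{CarstensenH_FOT} directly.

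The paper does not argue this way: it proves the estimate by a genuinely discrete argument, carried out ``analogously to \cite[Lemma~20]{CarstensenH_NNC}'', i.e.\ the standard tr-dev-div mechanism adapted to the discrete constraints. Schematically, one takes $v\in H^1_0(\Omega;\Rt)$ with $\div v=\tr(\tau)$ and $\|v\|_1\lesssim\|\tr(\tau)\|$, writes $\|\tr(\tau)\|^2=3\bigl(\vdual{\tau}{\strain{v}}-\vdual{\dev\tau}{\strain{v}}\bigr)$, and then uses the discrete orthogonality with a (Scott--Zhang type) interpolant $w\in S^1_0(\mesh;\Rt)$ of $v$ to replace $\vdual{\tau}{\strain{v}}=\dual{\trnn(v)}{\tau}_\cS$ by the commutator term $\dual{\trnn(v-w)}{\tau}_\cS$, which must be estimated using the polynomial structure of $\XX{\el}$, interpolation and scaling/inverse estimates on the faces --- with a mesh-independent constant. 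That facewise estimation of the tangential-jump terms is the real content of the lemma and is exactly what your proposal bypasses; as written, the proof does not go through.
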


\subsection{Finite element scheme and locking-free convergence} \label{sec_fem_conv}

Let $S^1_0(\mesh;\Rt):=P^1(\mesh;\Rt)\cap H^1_0(\Omega;\Rt)$ be
the space of continuous, piecewise linear vector-valued functions that vanish on $\Gamma$.
We select the discrete trace space
\(
   S^1_0(\cS) := \trnn\bigl(S^1_0(\mesh;\Rt)\bigr).
\)
Given $f\in L_2(\Omega;\Rt)$, our mixed finite element scheme seeks
\emph{$\sigma_h\in P^2_{nn}(\mesh)$, $u_h\in P^1(\mesh;\Rt)$, and $\eta_h\in S^1_0(\cS)$
such that}
\begin{subequations} \label{FEM}
\begin{alignat}{3}
   &\vdual{\AA\sigma_h}{\dsigma} + \vdual{u_h}{\div\dsigma}_\mesh - \dual{\eta_h}{\dsigma}_\cS
   &&= 0,\label{FEMa}\\
   &\vdual{\div\sigma_h}{\du}_\mesh - \dual{\deta}{\sigma_h}_\cS
   &&= -\vdual{f}{\du}\label{FEMb}
\end{alignat}
\end{subequations}
\emph{for any $\dsigma\in P^2_{nn}(\mesh)$, $\du\in P^1(\mesh;\Rt)$, and $\deta\in S^1_0(\cS)$.}

This scheme is locking free and converges quasi-optimally.

\begin{theorem}[locking-free convergence] \label{thm_Cea}
For given $f\in L_2(\Omega;\Rt)$ let $(\sigma,u,\eta)$ denote the solution to \eqref{VF}.
Mixed scheme \eqref{FEM} has a unique solution $(\sigma_h,u_h,\eta_h)$. It satisfies
\begin{align*}
   &\|\sigma-\sigma_h\|_{\div,\mesh} + \|u-u_h\| + \|\eta-\eta_h\|_{1/2,t,\cS}\nonumber\\
   &\lesssim
   \inf\bigl\{
   \|\sigma-\tau\|_{\div,\mesh} + \|u-v\| + \|\eta-\rho\|_{1/2,t,\cS};\;
   \tau\in P^2_{nn}(\mesh), v\in P^1(\mesh;\Rt), \rho\in S^1_0(\cS) \bigr\}
\end{align*}
with a hidden constant that is independent of $\mesh$ and $\lambda\in (0,\infty)$.

Let us furthermore assume that $u\in H^{1+s}(\Omega;\Rt)$ and $f\in H^r(\Omega;\Rt)$
for fixed $0<s\le 1$, $s\le r\le 2$, and
that there is a regularity shift $0<\sreg\le 1$ with
\[
   \|(\div\cC\varepsilon)^{-1}g\|_{1+\sreg}\le c\|g\|\quad\forall g\in L_2(\Omega;\Rt)
\]
(inversion subject to the homogeneous Dirichlet condition) for a constant $c>0$
that is independent of $g$ and $\lambda\in (0,\infty)$.
Then the estimates
\begin{align} \label{err_glob}
   \|\sigma-\sigma_h\|_{\div,\mesh} + \|u-u_h\| + \|\eta-\eta_h\|_{1/2,t,\cS}
   &\lesssim
   \|h_\mesh\|_{\infty}^s (\|u\|_{1+s} + \|\sigma\|_s + \|f\|_s),\\
   \|\div(\sigma-\sigma_h)\|_\mesh &\lesssim \|h_\mesh\|_{\infty}^r \|f\|_r, \nonumber\\
   \|u-u_h\| &\lesssim
   \|h_\mesh\|_\infty^{s+\sreg} \bigl(\|u\|_{1+s} + \|f\|_s\bigr) \nonumber
\end{align}
hold with intrinsic constants that are independent of $\mesh$ and $\lambda\in(0,\infty)$.
\end{theorem}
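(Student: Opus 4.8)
The plan is to obtain the convergence statement as a direct consequence of the abstract theory for mixed formulations, exactly as in the two-dimensional companion paper \cite{CarstensenH_NNC}. First I would verify that scheme \eqref{FEM} is a conforming Galerkin discretization of \eqref{VF}: the inclusions $P^2_{nn}(\mesh)\subset H_{nn}(\div,\mesh;\SS)$, $P^1(\mesh;\Rt)\subset L_2(\Omega;\Rt)$, and $S^1_0(\cS)\subset H^{1/2}_{0,t}(\cS;\Rt)$ are built in by construction of the discrete spaces. Since the bilinear forms of \eqref{FEM} are the restrictions of those in \eqref{VF}, uniform boundedness is inherited from Theorem~\ref{thm_VF}. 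Hence the whole burden of the quasi-optimality estimate reduces to establishing, uniformly in $\mesh$ and $\lambda$, the two discrete inf-sup conditions and the discrete kernel coercivity, after which the Babuška--Brezzi theory (in the form of \cite[Theorem~3.3]{CarstensenDG_16_BSF}, already cited in the proof of Theorem~\ref{thm_VF}) yields the estimate with a $\lambda$- and $\mesh$-independent constant.

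For the discrete inf-sup conditions I would use the Fortin-operator strategy. The commuting interpolation operator $\Inn{\mesh}$ of Proposition~\ref{prop_Inn}, together with $\div_\mesh\Inn{\mesh}=\Pi^1_\mesh\div$ and its boundedness, provides a Fortin operator mapping $H(\div,\Omega;\SS)$ into $P^2_{nn}(\mesh)$ that is compatible with both the volume pairing $\vdual{\div\tau}{\du}_\mesh$ and the skeleton pairing $\dual{\deta}{\tau}_\cS$; transferring the continuous inf-sup conditions from the proof of Theorem~\ref{thm_VF} through this Fortin operator gives the discrete ones, uniformly in $\mesh$ and $\lambda$ (for the trace constraint $\vdual{\tr\tau}{1}=0$ one restricts to the codimension-one subspace exactly as in the continuous case, using $\dual{\eta_h}{\id}_\cS=0$ for $\eta_h\in S^1_0(\cS)$). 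The discrete kernel coercivity follows from Lemma~\ref{la_trdevdiv2}: on the discrete kernel, $\div_\mesh\tau=0$, $\vdual{\tr\tau}{1}=0$, and the skeleton orthogonality against $S^1_0(\cS)$ hold, so $\|\tr\tau\|\le C\|\dev\tau\|$ with $C$ independent of $\tau$ and $\mesh$; plugging this into the representation \eqref{coercive} gives uniform coercivity, and existence/uniqueness of $(\sigma_h,u_h,\eta_h)$ follows.

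With quasi-optimality in hand, the explicit convergence rates \eqref{err_glob} follow by bounding the three infima. For $\|\sigma-\tau\|_{\div,\mesh}$ I would take $\tau=\Inn{\mesh}\sigma$ and invoke the two approximation estimates of Proposition~\ref{prop_Inn} with the given $s$ and $r$ (using $\sigma\in H^s(\Omega;\SS)$, which follows from $u\in H^{1+s}(\Omega;\Rt)$, and $\div\sigma=-f\in H^r(\Omega;\Rt)$). For $\|u-v\|$ I would take $v=\Pi^1_\mesh u$ and use the standard $L_2$-approximation estimate $\|u-\Pi^1_\mesh u\|\lesssim\|h_\mesh\|_\infty^{1+s}\|u\|_{1+s}$, which is of higher order than required. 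For $\|\eta-\rho\|_{1/2,t,\cS}$ I would take $\rho=\trnn(\Pi^1_\mesh u)$ (noting $\eta=\trnn(u)$ by Theorem~\ref{thm_VF}) and bound the trace norm by $\|u-\Pi^1_\mesh u\|_1\lesssim\|h_\mesh\|_\infty^s\|u\|_{1+s}$ using the definition of $\|\cdot\|_{1/2,t,\cS}$ as an infimum over $H^1$-extensions. The last, improved, $L_2$-estimate for $u-u_h$ is obtained by an Aubry--Nitsche duality argument exploiting the regularity-shift hypothesis on $(\div\cC\varepsilon)^{-1}$, again mirroring \cite{CarstensenH_NNC}. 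The main obstacle is the uniform-in-$\lambda$ kernel coercivity, but that has already been isolated into Lemma~\ref{la_trdevdiv2} (whose proof in turn rests on the trace-dev-div inequality of \cite{CarstensenH_FOT}), so the remaining work is assembling these pieces rather than new analysis.
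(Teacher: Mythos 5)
Your overall route is the one the paper intends: it proves Theorem~\ref{thm_Cea} by declaring the argument analogous to Theorems~21 and~22 of \cite{CarstensenH_NNC}, with exactly the ingredients you list --- conformity of the discrete spaces, discrete inf-sup via the commuting interpolation operator of Proposition~\ref{prop_Inn}, discrete kernel coercivity via Lemma~\ref{la_trdevdiv2} combined with \eqref{coercive}, interpolation estimates for the rates, and an Aubin--Nitsche duality argument with the regularity shift for the improved $L_2$ bound. So there is no difference in strategy; the issues are two concrete steps in your assembly.

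First, your choice $\rho=\trnn(\Pi^1_\mesh u)$ does not work as written: $\Pi^1_\mesh$ is the $L_2$-projection onto \emph{discontinuous} piecewise linear vector fields, so $\Pi^1_\mesh u\notin H^1(\Omega;\Rt)$ in general, $\trnn$ is not defined on it, and even a face-wise trace would not lie in $S^1_0(\cS)=\trnn\bigl(S^1_0(\mesh;\Rt)\bigr)$. You need a conforming quasi-interpolant into $S^1_0(\mesh;\Rt)$ preserving the homogeneous Dirichlet condition (Scott--Zhang, or nodal interpolation when $s>1/2$), which gives $\|\eta-\trnn(v_h)\|_{1/2,t,\cS}\le\|u-v_h\|_1\lesssim\|h_\mesh\|_\infty^s\|u\|_{1+s}$; this is the standard fix and matches the two-dimensional argument. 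Second, your Fortin step overstates the mapping properties of $\Inn{\mesh}$: by Proposition~\ref{prop_Inn} it is only defined and bounded on $H(\div,\Omega;\SS)\cap L_q(\Omega;\SS)$ with $q>2$, not on all of $H(\div,\Omega;\SS)$, so one cannot simply push an arbitrary supremizing $\tau$ through it. To transfer the inf-sup conditions one must produce, for given $\du$ (and $\deta$), a sufficiently regular competitor --- e.g.\ a stress with $\div\tau=\du$ and $\tau\in H^s(\Omega;\SS)\subset L_q(\Omega;\SS)$ obtained from an auxiliary elliptic problem, or argue by density in the trace inf-sup --- exactly as in \cite{CarstensenH_NNC}. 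Neither point changes the architecture of your proof, but both must be repaired for the argument to be complete; the kernel-coercivity part (the genuinely delicate, $\lambda$-uniform ingredient) is handled correctly via Lemma~\ref{la_trdevdiv2}.
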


\begin{proof}
The proof is analogous to the proofs of Theorems~21 (quasi-optimal convergence)
and~22 (approximation orders) in \cite{CarstensenH_NNC}.
To this end we note that, for homogeneous Dirichlet data,
the condition $u_D\in H^{1+s}(\Omega;\Rt)$ with $s>0$ for a Dirichlet extension
$u_D$ in \cite[Theorem~21]{CarstensenH_NNC} is obsolete
and that the tensors $\sigma_0=\sigma_{0,h}\in\SS$ appearing in
\cite[Theorem~22]{CarstensenH_NNC} vanish, cf.~definitions (9) and (28) there.
Specifically, Lemma~\ref{la_trdevdiv2} is critical for the robust stability of the
discrete scheme and, having established the quasi-optimal convergence,
error estimate \eqref{err_glob} is consequence of Proposition~\ref{prop_Inn}.
\end{proof}
\section{Numerical experiments} \label{sec_num}


We use $\Omega=(0,1)^3$ and consider Dirichlet boundary conditions everywhere
given by the manufactured displacement
\[
   u(x_1,x_2,x_3)
   = \begin{pmatrix}
      \sin(3x_1)\cos(3x_2)\cos(3x_3)\\ \cos(3x_1)\sin(3x_2)\cos(3x_3)\\
      \cos(3x_1)\cos(3x_3)\sin(3x_3)
     \end{pmatrix},
\]
and select $f:=-\div\sigma$ where $\sigma=\CC\strain{u}$.
We choose Young's modulus $E=1$ and different values of $\nu$.
The Lam\'e parameters are $\mu = \frac E{2(1+\nu)} = \frac 1{2(1+\nu)}$ and
$\lambda=\frac{E\nu}{(1+\nu)(1-2\nu)}=\frac{\nu}{(1+\nu)(1-2\nu)}$.

We use quasi-uniform meshes and plot the individual approximation errors
$\|u-u_h\|$ (``u''), $\|\strain{u}-\strain{\wtilde u_h}\|$ (``strain''),
$\|\sigma-\sigma_h\|$ (``sigma''), and $\|\div(\sigma-\sigma_h)\|_\mesh$ (``div sigma''),
normalized by the norm of the exact solution
$\bigl(\|u\|_1^2+\|\sigma\|_{\div}^2\bigr)^{1/2}$.
Here, $\wtilde u_h\in S^1_0(\mesh;\Rt)$ is the piecewise linear interpolation of the continuous,
piecewise linear trace approximation $\eta_h$.
Note that $\|\strain{u}-\strain{\wtilde u_h}\|$ is, up to the
$L_2$ part $\|u-\wtilde u_h\|$, an upper bound of $\|\trnn(u)-\eta_h\|_{1/2,t,\cS}$.
The right-hand side functional $\vdual{f}{\cdot}$ is approximated by a 4-point Gauss
formula (integrating quadratic polynomials exactly),
and the errors are calculated by a 14-point Gauss formula (an overkill to be close to
the exact errors), cf.~\cite{JaskowiezS_21_HOS}. The non-homogeneous Dirichlet boundary condition
is approximated by assigning the corresponding vertex values to
trace approximation $\eta_h$.

Figure~\ref{fig_a} shows the results for $\nu=0.3$, and confirms the
approximation orders with respect to $h:=\|h_\mesh\|_\infty$ proved by Theorem~\ref{thm_Cea}:
$O(h)$ for the stress and trace approximations, and $O(h^2)$ for the approximations
of the displacement and the divergence of the stress.

The sequence of results for $\nu=0.3,0.45,0.49,0.4999$ shown in Figure~\ref{fig}
confirms that our scheme is locking free.
(The fact that some relative errors are initially smaller for larger values of $\lambda$
does not contradict this.)
Note that $\tr(\sigma)=\tr(\CC\strain{u})=9(2\mu+3\lambda)\cos(3x_1)\cos(3x_2)\cos(3x_3)$ and
\[
   \dev\sigma
   =-6\mu\begin{pmatrix} 0 & \sin(3x_1)\sin(3x_2)\cos(3x_3) & \sin(3x_1)\cos(3x_2)\sin(3x_3)\\
                         * & 0 & \cos(3x_1)\sin(3x_2)\sin(3x_3)\\
                         * & * & 0
         \end{pmatrix},
\]
that is,
$\|\tr(\sigma)\|$ and $\vdual{\tr(\sigma)}{1}$ are unbounded as $\nu\to 1/2$
while $\dev\sigma$ is bounded.

\begin{figure}[hb]
\centering
\begin{subfigure}{0.5\textwidth}
\includegraphics[width=\textwidth]{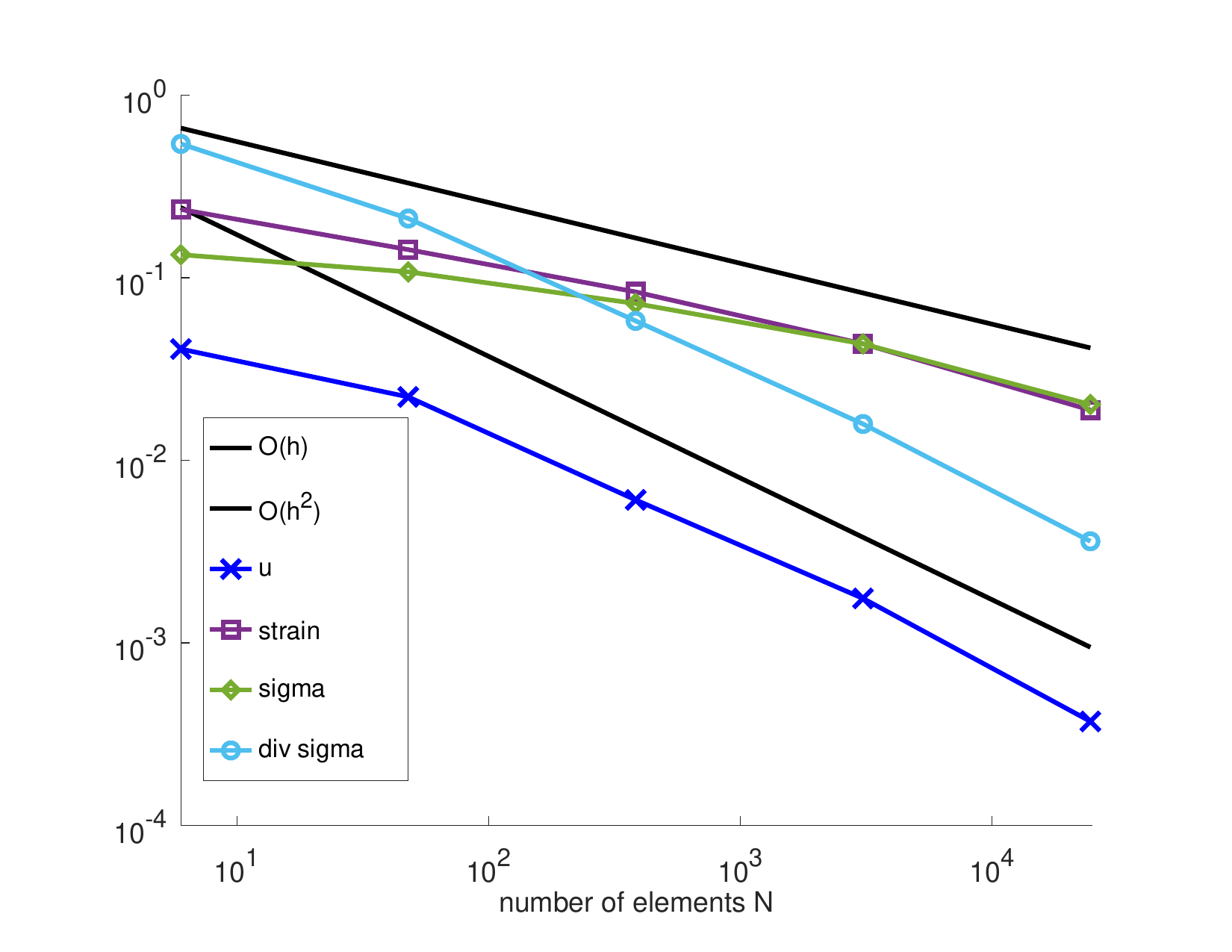}
\caption{$\nu=0.3$}
\label{fig_a}
\end{subfigure}
\hspace{-2em}
\begin{subfigure}{0.5\textwidth}
\includegraphics[width=\textwidth]{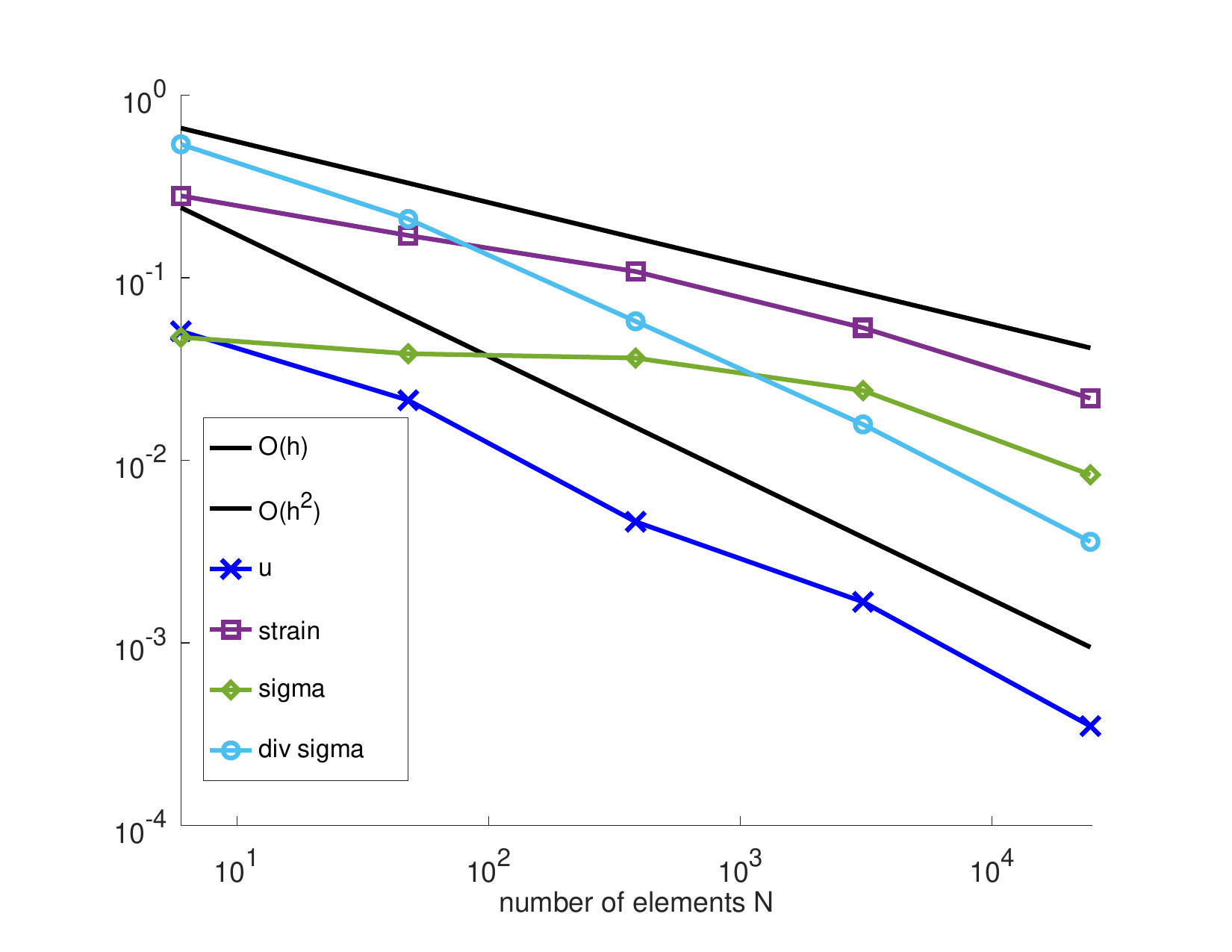}
\caption{$\nu=0.45$}
\label{fig_b}
\end{subfigure}

\begin{subfigure}{0.5\textwidth}
\includegraphics[width=\textwidth]{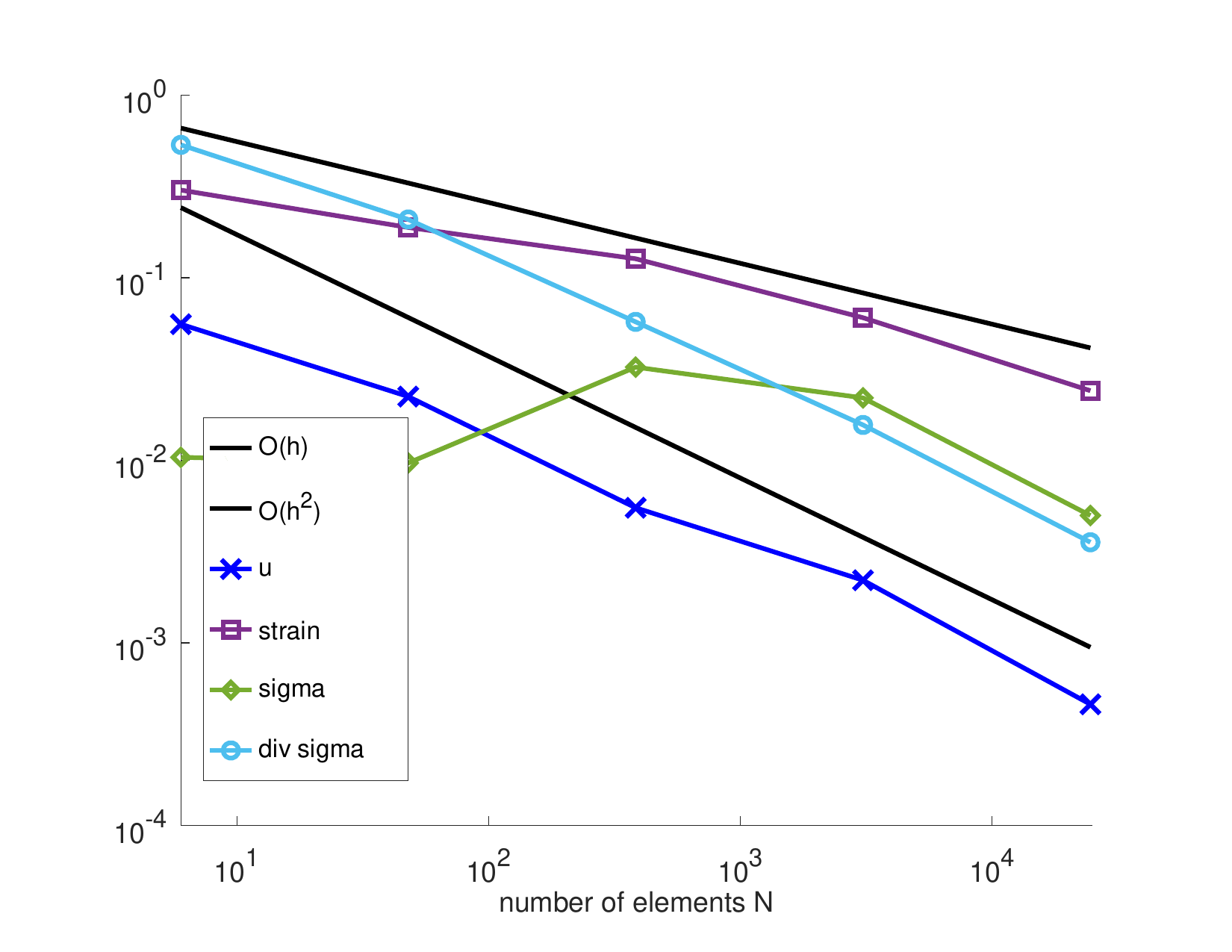}
\caption{$\nu=0.49$}
\label{fig_c}
\end{subfigure}
\hspace{-2em}
\begin{subfigure}{0.5\textwidth}
\includegraphics[width=\textwidth]{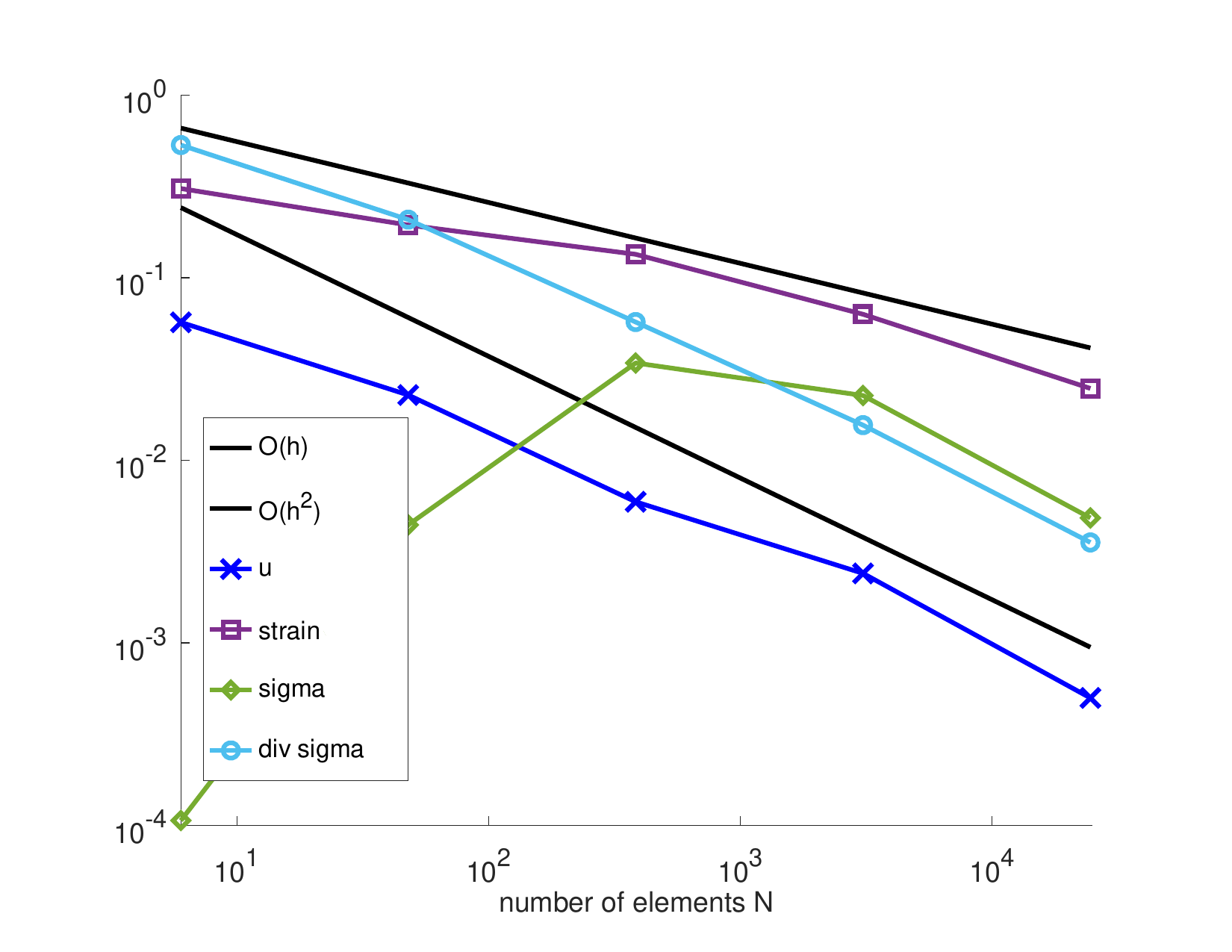}
\caption{$\nu=0.4999$}
\label{fig_d}
\end{subfigure}

\caption{Relative errors for the manufactured example with different values of $\nu$.}
\label{fig}
\end{figure}

\clearpage
\bibliographystyle{plain}
\bibliography{/home/norbert/tex/bib/bib,/home/norbert/tex/bib/heuer}

\end{document}